\title{On the topology of convergence in measure, defined on the ring $\mathcal{M}(X,\mathscr{A},\mu)$}
\theoremstyle{plain}
\newtheorem{theorem}{Theorem}[section]
\newtheorem{lemma}[theorem]{Lemma}
\newtheorem{corollary}[theorem]{Corollary}
\theoremstyle{definition}
\newtheorem{definition}[theorem]{Definition}
\theoremstyle{definition}
\newtheorem{remark}[theorem]{Remark}
\newtheorem{counter example}[theorem]{Counter Example}
\newtheorem{example}[theorem]{Example}
\newtheorem{examples}[theorem]{Examples}
\newtheorem{question}[theorem]{Question}
\numberwithin{equation}{section}
\author[A. Dey]{Amrita Dey}	\address{Department of Pure Mathematics, University of Calcutta, 35, Ballygunge Circular Road, Kolkata 700019, West Bengal, India}	\email{deyamrita0123@gmail.com}
\keywords{convergence in measure, pseudo-rank function, non-atomic measure, purely atomic measure, connectedness, compactness}
\subjclass[2020]{Primary 54C35; Secondary 28A10, 54E35, 54E50}
\begin{document}
	
	\title{On the topology of convergence in measure, defined on the ring $\mathcal{M}(X,\mathscr{A},\mu)$}

	%

	\thanks {The author is immensely grateful for the award of research fellowship provided by the University Grants Commission, New Delhi (NTA Ref. No. 221610014636).}

	\large
	\begin{abstract}
		For a probability measure space $(X,\mathscr{A},\mu)$, the topology $\mathcal{M}_\mu$, is defined on the ring $\mathcal{M}(X,\mathscr{A},\mu)$ of real-valued measurable functions on $(X,\mathscr{A},\mu)$ involving the notion of \textit{convergence in measure}. It turns out that if $f=g$ is assumed to be in the \textit{almost everywhere} sense, $\mathcal{M}_\mu$ is a completely metrizable space and is induced by the metric given by $\delta(f,g)=\mu(X\setminus Z(f-g))$, for $f,g\in \mathcal{M}(X,\mathscr{A},\mu)$. The notion of a measure being bounded away from zero is introduced and it is observed that a measure $\mu$ is bounded away from zero if and only if it is purely atomic and contains at most finitely many pairwise disjoint atoms. Topological properties, such as being a $P$-space, extremal disconnectedness and local connectedness of $\mathcal{M}_\mu$ are found to be equivalent to the underlying measure $\mu$ being bounded away from zero. The space $\mathcal{M}_\mu$ is proven to be never Lindel\"{o}f, and hence cannot be seperable, second countable or compact. It is established that $\mathcal{M}_\mu$ is connected (in fact, path-connected) if and only if $\mu$ is non-atomic and $\mathcal{M}_\mu$ is totally disconnected if and only if $\mu$ is purely atomic. The component of a point in $\mathcal{M}_\mu$ (which is found to be equivalent to the path-component and quasicomponent of that point in $\mathcal{M}_\mu$) is computed in a general setting.
	\end{abstract}	
	\maketitle
	
	\section{Introduction}
	
	Several topologies have already been defined on the ring $\mathcal{M}(X,\mathscr{A})$ of real-valued measurable functions, defined on the measurable space $(X,\mathscr{A})$, where $X$ is a non-empty set and $\mathscr{A}$ is a $\sigma$-algebra on $X$  \cite{SBag, RakeshDa, Dada, Estaji}. However, the topology introduced and studied in this manuscript differs significantly from the aforementioned topologies. In order to make this article self-contained, we recall few notions first.
	
	A measure $\mu$ on the measurable space $(X,\mathscr{A})$ is defined as a non-negative real-valued function on $\mathscr{A}$ which satisfies the following conditions:
	\begin{enumerate}[label=(\roman*)]
		\item $\mu(\emptyset)=0$ 
		\item For a sequence $\{A_n\colon n\in \mathbb{N} \}$ of pairwise disjoint sets  in $\mathscr{A}$,   $\displaystyle{\mu(\bigsqcup_{n=1}^{\infty}A_n)=\sum\limits_{n=1}^{\infty}\mu(A_n)}$.
	\end{enumerate}
	The triplet $(X,\mathscr{A},\mu)$ is called a measure space. Moreover, if $\mu(X)=1$, then $\mu$ is said to be a probability measure. Throughout this article, $\mu$ is always considered to be a probability measure. 
	Throughout this article, for $r\in \mathbb{R}$, $\boldsymbol{r}$ will denote the constant function on $X$ having value $r$ and for $A\subseteq X$, $\chi_A(x)=\begin{cases}
		1 &x\in A \\
		0 &x\in X\setminus A
	\end{cases}$. For each $f\in \mathcal{M}$, $Z(f)$ denotes the collection of all points in $X$ on which $f$ vanishes, that is, $Z(f)=\{x\in X\colon f(x)=0\}$. We say that $f,g\in \mathcal{M}$ are equal almost everywhere (``a.e.") with respect to $\mu$ on $X$ if $\mu(X\setminus Z(f-g))=0$. A sequence $\{f_n \}$ of measurable functions in $\mathcal{M}(X,\mathscr{A},\mu)$ is said to \textit{converge in measure} \cite{Folland} to an $f\in \mathcal{M}(X,\mathscr{A},\mu)$, i.e., ``$\{f_n\}\rightarrow f$ in measure'', if for each $c>0$, $$\lim\limits_{n\rightarrow \infty} \mu(\{x\in X\colon |f_n(x)-f(x)|>c \})=0.$$ Equivalently, $\{f_n\}$ is said to \text{converge in measure} to $f$ if for each $k\in \mathbb{N}$, $$\lim\limits_{n\rightarrow \infty} \mu(\{x\in X\colon |f_n(x)-f(x)|>\frac{1}{k} \})=0.$$ In this context, for each subset $A$ of $\mathcal{M}$, we define $\overline{A}$ as follows: \[f\in \overline{A} \text{ if } \text{ there exists a sequence } \{f_n\}\subseteq A \text{ such that }\{f_n\}\rightarrow f \text{ in measure}. \] One can verify using routine arguments that the family \[\{\mathcal{M} \setminus A\colon A\subseteq \mathcal{M} \text{ with }A=\overline{A} \} \] forms a topology on $\mathcal{M}$, which is termed the `\textit{topology of convergence in measure}'  on $\mathcal{M}(X,\mathscr{A},\mu)$, and shall henceforth be denoted by $\mathcal{M}_\mu$. Evidently, the concept of `convergence' in this topology is equivalent to that of the notion of `convergence in measure'.	Observe that, if $f=g$ a.e.\ on $X$ and a sequence $\{f_n\}$ in $\mathcal{M}(X,\mathscr{A},\mu)$ converges to $f$ in measure, then $\{f_n\}$ converges to $g$ in measure as well. To avoid such anomalies and for the purpose of this article, we assume that `$f=g$' means $f=g$ a.e. on $X$. Furthermore, the ring $\mathcal{M}(X,\mathscr{A},\mu)$ shall be denoted as $\mathcal{M}$ throughout this article. 
	
	Moreover, recall that the ring $\mathcal{M}$ is a Von-Neumann regular ring (A commutative ring with unity $R$ is said to be a Von-Neumann regular ring if for each $x\in R$, there exists $y\in R$ such that $x=x^2y$). A map $N\colon R\longrightarrow [0,1]$ on a Von-Neumann regular ring $R$ is said to be a pseudo-rank function \cite{Goodearl} if it satisfies the following conditions: \begin{enumerate}[label=(\roman*)]
		\item $N(1)=1$
		\item For $x,y\in R$, $N(xy)\leq N(x)$ and $N(xy)\leq N(y)$
		\item For $e,f\in R$ satisfying $e^2=1=f^2$ and $ef=0=fe$, $N(e+f)=N(e)+N(f)$. 
	\end{enumerate} Each pseudo-rank function induces a pseudometric $\delta$ on $R$ as $\delta(x,y)=N(x-y)$ for $x,y\in R$. $N$ is uniformly continuous on the pseudometric space $(R,\delta)$ \cite{Goodearl}. If additionally, $N(x)>0$ for all non-zero $x$ in $R$, then $N$ is said to be a rank function. Consequently, the pseudometric $\delta$ induced by $N$ forms a metric on $R$. One such rank function can be defined on $\mathcal{M}$ as follows: \[N_\mu(f):=\mu(X\setminus Z(f)). \] That $N_\mu$ is a rank function can be verified through elementary arguments. It is observed in this article that the metric $\delta$, induced by $N_\mu$ generates  the space $\mathcal{M}_\mu$. In other words, the topology of convergence in measure, $\mathcal{M}_\mu$ is a metrizable space. Moreover, it follows from \cite[Lemma 19.1]{Goodearl} that $\mathcal{M}_\mu$ is a topological ring. We dedicate this article to the study of this metric space in view of the underlying measure $\mu$. Prior to the discussion of this space, certain preliminaries are established.
	

	Section 2 of this article is devoted to building necessary mathematical tools for the development of this article. We recall several measure theoretic terms and results. It is a known fact that for a non-atomic measure $\mu$ and $r\in [0,1]$, there exists a measurable set $A_r$ with $\mu(A_r)=r$. It is verified that the sets of the form $A_r$ can be constructed in such a manner that whenever $r<s$, $A_r\subsetneqq A_s$, for $r,s\in [0,1]$. Also, it is realised that if $\mu$ is a purely atomic measure, then the range set of $\mu$ is at most a countable set with its complement dense in $[0,1]$. The result established by Johnson in \cite{J} which informs us that a measure $\mu$ can be decomposed into the sum of a purely atomic measure and a non-atomic measure, referred to as `the purely atomic part' and `the non-atomic part' of $\mu$ respectively. We define the concept of a measure being bounded away from zero and describe some connections between this notion and the atomicity of measure. The rest of this section is devoted to certain preliminary discussions on the topological space $\mathcal{M}_\mu$. At this point, the previously made assertion that the space $\mathcal{M}_\mu$ is induced by the metric $\delta$, where $\delta(f,g)=\mu(X\setminus Z(f-g))$, for any $f,g\in \mathcal{M}$; is established. In addition, it is observed that $\delta$  is a complete metric space and hence is also a \v{C}ech-complete space and a Baire space. Following this, an equivalence is obtained between phenomena of the measure $\mu$ being bounded away from zero and that of $\mathcal{M}_\mu$ being the discrete space.Combining this observation and the fact that $\mathcal{M}_\mu$ is metrizable, it is noted that $\mathcal{M}_\mu$  is a $P$-space and/or an extremally disconnected space if and only if $\mu$ is bounded away from zero. Moreover, unlike the well-known $m_\mu$-topology on $\mathcal{M}$ \cite{SBag}, the collection of units $U_\mu$ in $\mathcal{M}$ fails to be open in $\mathcal{M}_\mu$, for any $\mu$ which is not bounded away from zero. In other words, $U_\mu$ is not open in $\mathcal{M}_\mu$, unless it is the discrete space. These observations terminate this section.

	Section 3 deals with the discussions on compactness and related properties of the space $\mathcal{M}_\mu$. We first realise that $\mathcal{M}_\mu$ cannot be a Lindel\"{o}f space and since $\mathcal{M}_\mu$ is a metric space, it then follows that $\mathcal{M}_\mu$ cannot be a separable space or a second countable space either. Consequently, $\mathcal{M}_\mu$ is not a compact set. Moreover, we establish that if $\mu$ is not bounded away from zero (in particular, if $\mu$ is non-atomic), then any Lindel\"{o}f (resp.\ compact) set in $\mathcal{M}_\mu$ has empty interior. From this, we conclude that $\mathcal{M}_\mu$ is locally compact if and only if $\mu$ is bounded away from zero. Furthermore, it is proved the phenomenon that each compact set in $\mathcal{M}_\mu$ has finite cardinality is equivalent to $\mu$ being bounded away from zero. Additionally, if $\mu$ is not purely atomic, an uncountable Lindel\"{o}f set in $\mathcal{M}$ is constructed. Whether such a Lindel\"{o}f set exists in $\mathcal{M}_\mu$ for a purely atomic measure $\mu$ remains an unanswered question.
	
	In Section 4, we aim to discuss the concept of connectedness in the space $\mathcal{M}_\mu$. The section opens with the observation that the space $\mathcal{M}_\mu$ is a path-connected space whenever $\mu$ is a non-atomic measure. Moreover, it is shown that $\mathcal{M}_\mu$ becomes totally disconnected when the measure $\mu$ is chosen to be purely atomic. Having discussed the situations where $\mu$ is either purely atomic or non-atomic, we discuss an example where the measure $\mu$ is neither purely atomic nor non-atomic. Since $C_p(X)$ is a topological ring, our only concern is the component of $\boldsymbol{0}$. It is observed in the aforementioned example that the component of $\boldsymbol{0}$, denoted by $K_{\boldsymbol{0}}$ hereafter, is solely dependent on the purely atomic part of the measure $\mu$. Motivated by this example, the component of $\boldsymbol{0}$ has been computed for an arbitrary measure space as: $K_{\boldsymbol{0}}=\{f\in \mathcal{M}\colon \mu_1(X\setminus Z(f))=\{0\} \}$, where $\mu_1$ is the purely atomic part of the underlying measure $\mu$. It is further verified that $K_{\boldsymbol{0}}$ is also the path-component and the quasicomponent of $\boldsymbol{0}$ in $\mathcal{M}$. Consequently, the component, path-component and the quasicomponent of each point in the space $\mathcal{M}_\mu$ coincide. One can recall that this phenomenon occurs when a space is compact and Hausdorff or is locally connected. However, $\mathcal{M}_\mu$ is never a compact space. Moreover, for a measure which is purely atomic but not bounded away from zero, $\mathcal{M}_\mu$ is not locally connected. In fact, it is established that $\mathcal{M}_\mu$ is locally connected if and only if the purely atomic part of the underlying measure is either zero or bounded away from zero.

	\section{Prerequisites}
	
	We begin this section with the discussion of some measure theoretic concepts. A measurable set $A\in \mathscr{A}$ is said to be an atom \cite{J} if $\mu(A)>0$ and whenever $B\in \mathscr{A}$, either $\mu(A\cap B)=0$ or $\mu(A\setminus B)=0$. If each measurable set in $\mathscr{A}$ with positive measure contains an atom, then  the measure space $(X,\mathscr{A},\mu)$ is said to be purely atomic. If the measure space $(X,\mathscr{A},\mu)$ contains no atoms, then it is called non-atomic. We state a few examples. 
	\begin{examples} \label{eg}
		\		\begin{enumerate}[label=\arabic*.]
			\item \label{eg2}Consider $\mathscr{L}$ to be the 	$\sigma$-algebra of all Lebesgue measurable subsets of $[0,1]$ and $\mu_l$, the Lebesgue measure on $[0,1]$. Then the measure space $([0,1],\mathscr{L},\mu_l)$ is non-atomic.
			
			\item \label{eg3} Let $X$ be a non-empty set and $\mathscr{A}$, a $\sigma$-algebra on $X$. Let $p\in X$ be fixed. The Dirac measure $\delta_p$, at the point $p$, defined on $\mathscr{A}$ as: $\delta_p(A)=\begin{cases}
				0 &if\;p\in A \\
				1 &if\;p\in X\setminus A
			\end{cases}$ is a purely atomic measure on  $(X,\mathscr{A})$.
			
			\item \label{eg0} Let $X$ be an infinite set. Then there exists a countably infinite subset $N=\{x_n\colon n\in \mathbb{N} \}$ of $X$. Suppose $\mathscr{A}$ is a $\sigma$-algebra on $X$ such that $\{x_n \}\in \mathscr{A}$ for each $n\in \mathbb{N}$. On the measurable space $(X,\mathscr{A})$, define the measure $\mu_N$ as $\mu_N(A)=0$ if $A\cap N=\emptyset$ and whenever $A\cap N\neq \emptyset$, $\mu_N(A)=\sum\limits_{n\in S}\frac{1}{2^n}$, where $S=\{n\in \mathbb{N}\colon x_n\in A\cap N \}$. Then for each $n\in \mathbb{N}$, $\{x_n \}$ is an atom and thus, this measure space is a purely atomic.
		\end{enumerate}
	\end{examples}
	
	The notations that we have used in the above examples shall be prevalent throughout this article. Sierpi\'{n}ski established the following result for a non-atomic measure space.
	
	\begin{theorem} \cite{S} \label{tnonatomic}
		Let $\mu$ be a non-atomic measure on the measurable space $(X,\mathscr{A})$ and $A\in \mathscr{A}$ be such that $\mu(A)$ is a positive real number. Then for each $r\in [0,\mu(A)]$, there exists $A_r\in \mathscr{A}$ such that $\mu(A_r)=r$.
	\end{theorem}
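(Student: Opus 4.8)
The plan is to fix $r\in[0,\mu(A)]$ and construct $A_r$ directly. The endpoints are immediate: take $A_0=\emptyset$ and $A_{\mu(A)}=A$, so I may assume $0<r<\mu(A)$. The whole argument rests on a \emph{small-sets lemma}: in a non-atomic space, every $E\in\mathscr{A}$ with $\mu(E)>0$ contains, for each $\eta>0$, a subset $F\in\mathscr{A}$ with $0<\mu(F)<\eta$. I would prove this by iterated halving. Since $E$ is not an atom, the failure of the atom condition yields $E'\subseteq E$ with $0<\mu(E')<\mu(E)$ and $\mu(E\setminus E')>0$; as these two measures sum to $\mu(E)$, at least one of $E'$, $E\setminus E'$ has measure at most $\tfrac12\mu(E)$. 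Passing to that piece and repeating produces a decreasing sequence of positive-measure sets whose measures are bounded by $2^{-k}\mu(E)$, and any sufficiently late term lies below $\eta$.

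With the lemma in hand, I would produce $A_r$ by a greedy exhaustion. Put $C_0=\emptyset$ and, given $C_n\subseteq A$ with $\mu(C_n)\le r$, stop if $\mu(C_n)=r$; otherwise set $t_n=\sup\{\mu(F):F\in\mathscr{A},\ F\subseteq A\setminus C_n,\ \mu(F)\le r-\mu(C_n)\}$, choose $F_n\subseteq A\setminus C_n$ with $\mu(F_n)>t_n/2$ and $\mu(C_n)+\mu(F_n)\le r$, and let $C_{n+1}=C_n\cup F_n$. The sets $F_n$ are pairwise disjoint, the $C_n$ increase, and $\mu(C_n)\le r$ throughout; writing $C=\bigcup_n C_n$, continuity from below gives $\mu(C)=\lim_n\mu(C_n)=:c\le r$.

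It then remains to show $c=r$, and here the small-sets lemma does the decisive work. If $c<r$, then $\mu(A\setminus C)=\mu(A)-c>0$, so the lemma supplies $F\subseteq A\setminus C$ with $0<\mu(F)<r-c$. Because $F\subseteq A\setminus C\subseteq A\setminus C_n$ and $\mu(F)<r-c\le r-\mu(C_n)$, this single $F$ is admissible in the supremum defining every $t_n$, whence $t_n\ge\mu(F)$ and $\mu(F_n)>\mu(F)/2$ for all $n$. Then $\mu(C_n)=\sum_{k<n}\mu(F_k)>n\,\mu(F)/2\to\infty$, contradicting $\mu(C_n)\le r$. Hence $c=r$, and $A_r:=C$ is the required set.

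I expect the main obstacle to be the small-sets lemma itself: extracting subsets of arbitrarily small positive measure is precisely where non-atomicity is used, and the halving step must be justified carefully from the definition of an atom. The remaining delicacy is ensuring the exhaustion closes in the limit --- that the residual $A\setminus C$ retains positive measure and that one fixed small set $F$ stays a legitimate competitor at \emph{every} finite stage, which is what forces the divergence $\mu(C_n)\to\infty$. An alternative to the explicit recursion is a Zorn's-lemma argument applied to $\{B\subseteq A:\mu(B)\le r\}$, taking a maximal element and enlarging it via the small-sets lemma if its measure were below $r$; I would, however, prefer the greedy construction, since verifying that chains admit upper bounds in the Zorn approach introduces null-set bookkeeping that the explicit sequence sidesteps.
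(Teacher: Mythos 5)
Your proof is correct. Note, however, that the paper does not prove this statement at all: it is Sierpi\'{n}ski's theorem, quoted with a citation to \cite{S}, so there is no in-paper argument to compare against. Your two-stage argument --- first the small-sets lemma by iterated halving (using that the non-atom condition splits any positive-measure set into two positive-measure pieces, one of measure at most half), then the greedy exhaustion with the supremum $t_n$ and the ``fixed competitor $F$ forces $\mu(F_n)>\mu(F)/2$ for all $n$'' divergence --- is the standard classical proof of this result, and every step checks out: $t_n>0$ is guaranteed by the small-sets lemma applied to $A\setminus C_n$, the $F_n$ are pairwise disjoint so $\mu(C_n)=\sum_{k<n}\mu(F_k)$, and the contradiction with $\mu(C_n)\le r\le 1$ is clean. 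This is a complete, self-contained justification of a result the paper merely imports.
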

	
	We extend the above result a little to obtain a kind of ordering in the sets of the form $A_r$, which shall be very useful in the subsequent sections.
	
	\begin{theorem} \label{nonatomic}
		Let $\mu$ be a non-atomic measure on a measurable space $(X,\mathscr{A})$. Then, for each $r\in [0,1]$, we can associate an $A_r\in \mathscr{A}$ such that $\mu(A_r)=r$ and whenever $r\leq s$, $A_r\subseteq A_s$.
		
	\end{theorem}
	
	\begin{proof}
		Consider the collection $\mathscr{F}$ of all functions $A\colon D\longrightarrow \mathscr{A}$ where $D\subseteq [0,1]$, $\mu(A(r))=r$ for each $r\in D$ and whenever $r,s\in D$ with $r\leq s$, $A(r)\subseteq A(s)$. The existence of such a function can be shown by considering $D=\{0,1\}$ with $A(0)=\emptyset$ and $A(1)=X$. The non-empty set $\mathscr{F}$ forms a partially ordered set with the relation that $A_1\leq A_2$ if $D_1\subseteq D_2$ and for all $r\in D_1$, $A_1(r)=A_2(r)$. 
		
		Consider a chain $\{A_\alpha\colon \alpha\in \Lambda \}$ in $\mathscr{F}$, where each $A_\alpha$ has domain $D_\alpha$. Define $A\colon \bigcup\limits_{\alpha\in \Lambda}D_\alpha \longrightarrow \mathscr{A}$ as $A(r)=A_\alpha(r)$ whenever $r\in D_\alpha$. Then it is evident that $A$ is an upper bound of the chain $\{A_\alpha\colon \alpha\in \Lambda \}$. So, by Zorn's Lemma $\mathscr{F}$ has a maximal element. 
		
		We assert that the domain of a maximal element is $[0,1]$. To see this, let $A\in \mathscr{F}$ be an element with domain $D\subsetneqq [0,1]$, then $[0,1]\setminus D\neq \emptyset$. If $0$ or $1$ is not in $D$, then we can extend the domain $D$ of $A$ to $D\cup \{0\}$ or $D\cup \{1\}$ and map $0$ to $\emptyset$ or $1$ to $X$ respectively.	Now, consider $0,1\in D$ and $c\in [0,1]\setminus D$. Define $D_{<c}=\{r\in D\colon r<c \}$ and $D_{>c}=\{r\in D\colon r>c \}$. If $r=\sup D_{<c}$, then there exists an increasing sequence $\{r_n\in D_{<c}\}$ converging to $r$. Define $A_r=\bigcup\limits_{n\in \mathbb{N}}A(r_n) $. Similarly, if $r=\inf D_{>c}$, then there exists a decreasing sequence $\{r_n\in D_{>c}\}$ converging to $r$ and we define $A_r=\bigcap\limits_{n\in \mathbb{N}}A(r_n) $. The map $A'\colon D\cup \{r\}\longrightarrow \mathscr{A}$ defined as   $A'(s)=A(s)$ for all $s\in D$ and $A'(r)=A_r$ is a member of $\mathscr{F}$ with $A'$ strictly greater than $A$. Finally, we assume that $0,1,\sup D_{<c}, \inf D_{>c}\in D$ for all $c\in [0,1]\setminus D$. Now, fix $c\in [0,1]\setminus D$. Let $a_0=\sup D_{<c}$ and $a_1=\inf D_{>c}$. Then $A(a_0)\subseteq A(a_1)$, $a_0<c<a_1$ and $\mu(A(a_1)\setminus A(a_0))=a_1-a_0$. Since $\mu(A(a_1)\setminus A(a_0))$, it follows from Theorem \ref{tnonatomic} that there exists a $B\in \mathscr{A}$ such that $\mu(B)=c-a_0$ and $B\subseteq A(a_1)\setminus A(a_0)$. Define $A_c=B\sqcup A(a_0)$. Then $\mu(A_c)=c$ and the map $A'\colon D\cup \{c\}\longrightarrow \mathscr{A}$ defined as   $A'(r)=A(r)$ for all $r\in D$ and $A'(c)=A_c$ is a member of $\mathscr{F}$ with $A'$ strictly greater than $A$. This ensures that any member $A\in \mathscr{F}$ having a domain which is properly contained in $[0,1]$ cannot be a maximal element.
	\end{proof}

	We note that there exist measures which are neither purely atomic nor non-atomic as can be observed in the next example.
	
	\begin{example} \label{eg4}
		Consider the measurable space $([0,1],\mathscr{L})$ and the measures $\mu_l$ and $\delta_0$ on $([0,1],\mathscr{L})$.
		Then $\mu=\frac{1}{2}(\mu_l+\delta_0)$ is a measure on $([0,1],\mathscr{L},\mu)$. Clearly, $\{0 \}$ is an atom in the measure space $([0,1],\mathscr{L},\mu)$, but the positive measurable set $[\frac{1}{2},1]$ contains no atoms. Consequently, $\mu$ is neither purely atomic nor non-atomic.
	\end{example} 
	
	We recall that if $\mu_1$ and $\mu_2$ are two measures on $(X,\mathscr{A})$, then $\mu_1$ is said to be `$\mathcal{S}$-singular' with respect to $\mu_2$, denoted by $\mu_1\mathcal{S}\mu_2$, if given any $E\in \mathscr{A}$, there exists $F\in \mathscr{A}$ with $F\subseteq E$ such that $\mu_1(E)=\mu_1(F)$ and $\mu_2(F)=0$ \cite{J}. Due to Johnson, we have the  following results.
	
	\begin{theorem}\cite[Theorem 2.1]{J} \label{tdecompose}
		Let $\mu$ be a measure on the measurable space $(X,\mathscr{A})$. Then $\mu$ can be expressed as $\mu=\mu_1+\mu_2$ with $\mu_1 \mathcal{S} \mu_2$ and $\mu_2 \mathcal{S} \mu_1$, where $\mu_1$ is purely atomic and $\mu_2$ is non-atomic.
	\end{theorem}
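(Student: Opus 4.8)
The plan is to isolate the ``atomic skeleton'' of $\mu$ inside $X$ and then obtain $\mu_1$ and $\mu_2$ by restricting $\mu$ to this skeleton and to its complement, respectively. First I would record the basic behaviour of atoms: if $A$ is an atom and $N$ is a $\mu$-null set, then $A\setminus N$ is again an atom of the same measure; and if $A,A'$ are atoms with $\mu(A\cap A')>0$, then applying the defining dichotomy of $A$ to the test set $A'$ (and symmetrically) forces $\mu(A\setminus A')=\mu(A'\setminus A)=0$, so that $A$ and $A'$ coincide up to a null set. Hence distinct atoms are essentially disjoint.

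Next I would extract a maximal family of essentially disjoint atoms. Because $\mu$ is a probability measure, for each $n$ there can be at most $n$ pairwise essentially disjoint atoms of measure at least $1/n$ (their disjoint union would otherwise exceed $\mu(X)=1$); taking the union over $n$ shows that any essentially disjoint family of atoms is countable. So I can list representatives $A_1,A_2,\dots$ of a maximal such family and disjointify them, replacing $A_n$ by $A_n\setminus\bigcup_{k<n}A_k$; by the previous paragraph this discards only null sets, so the $A_n$ remain atoms and are now genuinely pairwise disjoint. Setting $D=\bigcup_n A_n$, I define $\mu_1(E)=\mu(E\cap D)$ and $\mu_2(E)=\mu(E\setminus D)$, so that $\mu=\mu_1+\mu_2$.

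The two singularity conditions are then the routine part. Given $E$, the set $F=E\cap D$ satisfies $\mu_1(F)=\mu_1(E)$ and $\mu_2(F)=0$, yielding $\mu_1\,\mathcal{S}\,\mu_2$; while $F=E\setminus D$ gives $\mu_2(F)=\mu_2(E)$ and $\mu_1(F)=0$, yielding $\mu_2\,\mathcal{S}\,\mu_1$. To see that $\mu_1$ is purely atomic, take $E$ with $\mu_1(E)>0$; then $\mu(E\cap A_i)>0$ for some $i$, whence $\mu(A_i\setminus E)=0$ by the dichotomy for $A_i$, so $E\cap A_i$ differs from $A_i$ by a null set, is an atom of $\mu$ contained in $E$, and (since it lies in $D$) is readily checked to be an atom of $\mu_1$ as well.

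The main obstacle is the non-atomicity of $\mu_2$, and this is precisely where maximality does the work. Suppose $B$ were an atom of $\mu_2$. Since $\mu_2$ ignores $D$, one verifies that $B\setminus D$ has positive $\mu$-measure and inherits the atom dichotomy (as $\mu(B'\cap C)=\mu_2(B\cap C)$ and $\mu(B'\setminus C)=\mu_2(B\setminus C)$ for $B'=B\setminus D$), hence is an atom of $\mu$ disjoint from $D$; this contradicts the maximality of $\{A_n\}$. Therefore $\mu_2$ has no atoms, i.e.\ is non-atomic, completing the argument. The only genuinely delicate points are the bookkeeping that guarantees countability and disjointness of the atomic family, and the verification that passing to $X\setminus D$ cannot create new atoms.
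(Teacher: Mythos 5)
Your argument is correct. Note that the paper does not actually prove this statement: it is imported verbatim as Theorem~2.1 of Johnson's \emph{Atomic and nonatomic measures}, so there is no internal proof to compare against. Your construction is the standard one (and essentially the published one): show atoms are pairwise either essentially disjoint or essentially equal, use $\mu(X)=1$ to bound the number of essentially disjoint atoms of measure at least $1/n$ and hence get countability of a maximal essentially disjoint family, disjointify it by discarding null sets, and restrict $\mu$ to the resulting set $D$ and its complement. All the delicate points are handled: the singularity conditions are immediate because $\mu_1$ and $\mu_2$ are restrictions to complementary sets; pure atomicity of $\mu_1$ follows from countable additivity over the $A_i$ together with the atom dichotomy; and non-atomicity of $\mu_2$ correctly reduces, via the identity $\mu(B'\cap C)=\mu_2(B\cap C)$ for $B'=B\setminus D$, to a violation of maximality. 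The only points worth making explicit in a written version are the appeal to Zorn's lemma (or an exhaustion argument) for the existence of the maximal family, and the observation that disjointification does not change $D$, so maximality of the original family is still what gets contradicted.
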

	
	In light of the above result, we shall refer to $\mu_1$ as the `\textit{purely atomic part}' of $\mu$ and  $\mu_2$ as the `\textit{non-atomic part}' of $\mu$.
	
	
	\begin{theorem} \cite[Theorem 2.2]{J} \label{Johnson}
		If $(X,\mathscr{A},\mu)$ is a purely atomic measure space and $\mu(E)>0$, then there exists a countable collection of pairwise disjoint atoms $\{E_k\}_{k\in \mathbb{N}}$, each contained in $E$, such that $\mu(E)=\mu(\bigsqcup\limits_{n\in \mathbb{N}}E_k)$.
		
	\end{theorem}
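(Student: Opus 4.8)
The plan is to construct the atoms one at a time by an exhaustion argument, at each stage peeling off an atom whose measure is comparable to the largest one still available, so that no positive-measure remainder can survive.

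First I would set up the recursion. Suppose pairwise disjoint atoms $E_1,\dots,E_n\subseteq E$ have been chosen, and set $R_n=E\setminus\bigcup_{i=1}^{n}E_i$. If $\mu(R_n)=0$ I stop, since the finite family already gives the conclusion. Otherwise $\mu(R_n)>0$, so pure atomicity guarantees that $R_n$ contains an atom; I would then set
\[
s_n=\sup\{\mu(A)\colon A\in\mathscr{A}\text{ an atom},\ A\subseteq R_n\}>0
\]
and choose an atom $E_{n+1}\subseteq R_n$ with $\mu(E_{n+1})>s_n/2$. This yields a finite or countably infinite pairwise disjoint family of atoms inside $E$.

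Next I would control the sizes. Since the $E_k$ are disjoint subsets of $E$ and $\mu(X)=1$, we have $\sum_k\mu(E_k)\le\mu(E)\le 1$, so $\mu(E_k)\to 0$; and because $s_n<2\mu(E_{n+1})$ by construction, $s_n\to 0$ as well. Finally I would show the remainder $R=E\setminus\bigsqcup_k E_k=\bigcap_n R_n$ is null: if $\mu(R)>0$, pure atomicity supplies an atom $A\subseteq R$, and since $A\subseteq R_n$ for every $n$ we get $\mu(A)\le s_n\to 0$, forcing $\mu(A)=0$, which is impossible for an atom. Hence $\mu(R)=0$ and, by countable additivity, $\mu(E)=\mu(\bigsqcup_k E_k)$. (If the recursion terminates, the same conclusion holds for a finite union, which one may regard as an $\mathbb{N}$-indexed family.)

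The hard part will be the selection step rather than any estimate: choosing an arbitrary atom in $R_n$ is not enough, because the sequence $\mu(E_k)$ could be summable while a comparatively large atom remains forever unexhausted in the complement. Tying each choice to the supremum $s_n$ is precisely what prevents this. As an alternative I would consider extracting, via Zorn's lemma, a maximal pairwise disjoint family of atoms contained in $E$; such a family is automatically countable because a finite measure admits at most countably many disjoint positive-measure sets, and maximality together with pure atomicity again forces the complementary remainder to be null.
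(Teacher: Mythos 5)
Your argument is correct. Note that the paper does not prove this statement at all --- it is imported verbatim as Theorem 2.2 of Johnson's paper \cite{J} --- so there is no in-paper proof to compare against; your greedy exhaustion (picking at each stage an atom of measure more than half the supremum $s_n$, then using $\sum_k\mu(E_k)\le\mu(E)$ to force $s_n\to 0$ and kill any atom in the residual set) is the standard argument and is essentially Johnson's original one. The only cosmetic point is the indexing by $\mathbb{N}$ when the recursion terminates, which you already address, since ``countable'' includes finite; your Zorn's-lemma alternative (a maximal disjoint family of atoms in $E$ is countable by finiteness of $\mu$ and leaves a null remainder by maximality plus pure atomicity) is equally valid.
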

	
	We observe that if $\mu$ is a measure which is not purely atomic, then its range contains an interval.
	\begin{theorem} \label{atomic}
		Let $\mu$ be a probability measure on a measurable space $(X,\mathscr{A})$. Then the following statements are equivalent.
		
		\begin{enumerate}[label=\arabic*.]
			\item $\mu$  is purely atomic.
			\item $\mu(\mathscr{A})$ is at most countable.
			\item $[0,1]\setminus \mu(\mathscr{A})$ is dense in $[0,1]$.
		\end{enumerate}  
		
	\end{theorem}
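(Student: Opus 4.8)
The plan is to prove the three statements as a cycle $(1)\Rightarrow(2)\Rightarrow(3)\Rightarrow(1)$, leaning on Johnson's decomposition (Theorem~\ref{tdecompose}) and Sierpi\'{n}ski's theorem (Theorem~\ref{tnonatomic}). Two of the links are comparatively soft, while the genuine content sits in $(3)\Rightarrow(1)$, which I would establish in its contrapositive form by exhibiting an honest interval inside $\mu(\mathscr{A})$ whenever $\mu$ fails to be purely atomic. This also settles the remark preceding the theorem.

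For $(3)\Rightarrow(1)$, suppose $\mu$ is not purely atomic. By Theorem~\ref{tdecompose} I would write $\mu=\mu_1+\mu_2$ with $\mu_1$ purely atomic, $\mu_2$ non-atomic, and $\mu_2\mathcal{S}\mu_1$; since $\mu$ is not purely atomic, $\mu_2$ is not the zero measure, so $\mu_2(X)>0$. Applying the $\mathcal{S}$-singularity $\mu_2\mathcal{S}\mu_1$ to $E=X$ yields a measurable set $Q$ with $\mu_2(Q)=\mu_2(X)$ and $\mu_1(Q)=0$; then for every measurable $A\subseteq Q$ we have $\mu_1(A)=0$ and hence $\mu(A)=\mu_2(A)$. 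The restriction of $\mu_2$ to the measurable subspace $Q$ is again non-atomic and has total mass $\mu_2(X)>0$, so Theorem~\ref{tnonatomic} (applied inside $Q$) produces, for each $r\in[0,\mu_2(X)]$, a set $A_r\subseteq Q$ with $\mu_2(A_r)=r$, whence $\mu(A_r)=r$. Thus $[0,\mu_2(X)]\subseteq\mu(\mathscr{A})$, so $\mu(\mathscr{A})$ contains a nondegenerate interval and $[0,1]\setminus\mu(\mathscr{A})$ cannot be dense. The one point to handle with care here is the containment $A_r\subseteq Q$, which is why I would invoke Sierpi\'{n}ski on the subspace $Q$ rather than on $X$.

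The implication $(2)\Rightarrow(3)$ is routine: a countable subset of $[0,1]$ meets no nondegenerate subinterval, each such interval being uncountable, so its complement is dense. The remaining link $(1)\Rightarrow(2)$ is where I expect the main difficulty. The natural route is to apply Theorem~\ref{Johnson} with $E=X$ to obtain pairwise disjoint atoms $\{E_k\}_{k\in\mathbb{N}}$ with $\mu(\bigsqcup_k E_k)=\mu(X)=1$; since each $E_k$ is an atom, $\mu(A\cap E_k)\in\{0,\mu(E_k)\}$ for every $A\in\mathscr{A}$, and because $\mu(X\setminus\bigsqcup_k E_k)=0$ one obtains $\mu(A)=\sum_{k\in S}\mu(E_k)$ with $S=\{k:\mu(A\cap E_k)>0\}$, so that $\mu(\mathscr{A})\subseteq\{\sum_{k\in S}\mu(E_k):S\subseteq\mathbb{N}\}$. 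The crux, and the step I would scrutinise most closely, is the claim that this set of subset-sums is at most countable. This is the real obstacle: the subset-sums of a summable positive sequence need not form a countable set, and for dyadic masses they already fill an entire interval (compare the purely atomic Example~\ref{eg0}), so the passage from pure atomicity to a countable range cannot rest on Theorem~\ref{Johnson} alone and seems to require some further arithmetic restriction on the atom masses $\{\mu(E_k)\}$.
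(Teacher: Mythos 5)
Your links $(2)\Rightarrow(3)$ and $(3)\Rightarrow(1)$ are correct and agree in substance with the paper: for the latter, the paper also decomposes $\mu=\mu_1+\mu_2$ via Theorem \ref{tdecompose}, obtains $A_r$ with $\mu_2(A_r)=r$ from Theorem \ref{tnonatomic}, and then uses $\mu_2\mathcal{S}\mu_1$ to pass to $F_r\subseteq A_r$ with $\mu_1(F_r)=0$ and $\mu(F_r)=r$, so that $[0,\mu_2(X)]\subseteq\mu(\mathscr{A})$; your variant of applying Sierpi\'{n}ski inside a set $Q$ with $\mu_1(Q)=0$ achieves the same thing.

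The obstacle you isolate in $(1)\Rightarrow(2)$ is not a technical difficulty to be finessed --- it is fatal, and it is exactly where the paper's own proof goes wrong. The paper argues just as you do up to the inclusion $\mu(\mathscr{A})\subseteq\{\sum_{n\in A}\mu(E_n)\colon A\subseteq \mathbb{N}\}$ and then asserts that this set of subset-sums ``is atmost a countable set''. That assertion is false whenever there are infinitely many pairwise disjoint atoms of positive measure: each $\bigsqcup_{n\in A}E_n$ is measurable, so $\mu(\mathscr{A})$ in fact contains the entire achievement set of the positive summable sequence $(\mu(E_n))$, and such an achievement set is a nonempty perfect, hence uncountable, subset of $[0,1]$ (toggle membership of an index $n_0$ with $a_{n_0}<\epsilon$ to find nearby distinct points). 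Your dyadic example settles the matter: the purely atomic measure of Example \ref{eg}(\ref{eg0}) satisfies $\mu_N(\{x_n\colon n\in S\})=\sum_{n\in S}2^{-n}$ for every $S\subseteq\mathbb{N}$, so $\mu_N(\mathscr{A})=[0,1]$ and both $(2)$ and $(3)$ fail while $(1)$ holds. Hence no proof of $(1)\Rightarrow(2)$ exists; only $(2)\Rightarrow(3)\Rightarrow(1)$ is true, and even $(3)\Rightarrow(2)$ fails (take atom masses $2\cdot 3^{-n}$, whose subset-sums form an uncountable nowhere dense set, so the range has dense complement yet is uncountable). The hypothesis that is actually equivalent to $(2)$ is ``$\mu$ is bounded away from zero'' (equivalently, by Theorem \ref{tbdd}, finitely many pairwise disjoint atoms), for which the subset-sum argument does work because the sums are then finite in number. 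Be aware also that the false direction $(1)\Rightarrow(3)$ is reused later, e.g.\ in the proofs of Theorem \ref{tatomic} and Theorem \ref{r0}, so the error is not confined to this statement.
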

	
	\begin{proof}
		By Theorem \ref{tdecompose}, $\mu=\mu_1+\mu_2$ with $\mu_1 \mathcal{S} \mu_2$ and $\mu_2 \mathcal{S} \mu_1$, where $\mu_1$ is purely atomic and $\mu_2$ is non-atomic.
		
		First assume that $\mu$ is purely atomic. By Theorem \ref{Johnson}, there exists a countable collection of pairwise disjoint atoms $\{E_k\}_{k\in \mathbb{N}}$ in $X$ such that $\mu(X)=\mu(\bigsqcup\limits_{n\in \mathbb{N}}E_k)=\sum\limits_{k\in \mathbb{N}}\mu( E_k)$. We assert that for each atom $A$ in $X$, there exists a unique $n\in \mathbb{N}$ such that $\mu(A)=\mu(E_n)$. Indeed, $\mu(A)=\mu(A\cap \bigsqcup\limits_{k\in \mathbb{N}}E_k)=\sum\limits_{k\in \mathbb{N}}\mu(A\cap E_k)$. Since $\mu(A)>0$, there exists $n\in \mathbb{N}$ such that $\mu(A\cap E_n)>0$. That this $n$ is unique follows from the fact that $A$ is an atom and $\{E_k\}_{k\in \mathbb{N}}$ is a collection of pairwise disjoint atoms. Therefore, $\mu(A)=\mu(A\cap E_n)=\mu(E_n)$. Now, consider a measurable set $E\in \mathscr{A}$ with $\mu(E)>0$. Again by Theorem \ref{Johnson}, there exists  a countable collection of pairwise disjoint atoms $\{F_k\}_{k\in \mathbb{N}}$ in $X$ with $\mu(E)=\mu(\bigsqcup\limits_{n\in \mathbb{N}}F_k)=\sum\limits_{k\in \mathbb{N}}\mu(F_k)$. Now, for each $k\in \mathbb{N}$, there exists $n_k\in \mathbb{N}$ with $\mu(F_k)=\mu(E_{n_k})$ and so $\mu(E)=\sum\limits_{k\in \mathbb{N}}\mu(E_{n_k})$. Thus, measure of a measurable set in $X$ lies in the set $\{\sum\limits_{n\in A}\mu(E_n)\colon  A\subseteq \mathbb{N}  \}$, which is at most a countable set.
		
		Now consider $\mu$ to be not purely atomic, then $\mu_2$ is non-zero and so $\mu_2(X)>0$. For each $r\in [0,\mu_2(X)]$, there exists $A_r\in \mathscr{A}$ with $\mu_2(A_r)=r$ (by Theorem \ref{tnonatomic}). Since $\mu_2\mathcal{S}\mu_1$, for each $A_r$, there exists $F_r\in \mathscr{A}$ with $F_r\subseteq A_r$ such that $\mu_2(A_r)=\mu_2(F_r)$ and $\mu_1(F_r)=0$. Therefore, $\mu(F_r)=r$ for each $r\in [0,\mu_2(X)]$. This ensures that $\mu(\mathscr{A})$ contains $[0,\mu_2(X)]$.
	\end{proof}
	
	For the purpose of this article, we define the following crucial class of measures.
	
	\begin{definition}
		A measure $\mu$ is defined to be bounded away from zero if there exists $\lambda>0$ such that for all $A\in \mathscr{A}$, either $\mu(A)=0$ or $\mu(A)\geq \lambda$.
	\end{definition} 
	We note some connections between the concept of a measure being bounded away from zero and that of the atomicity of a measure.
	
	\begin{theorem}\label{Th1.1}
		The following assertions hold for a measure space $(X,\mathscr{A},\mu)$:
		\begin{enumerate}[label=\arabic*.]
			\item If $\mu$ is a non-atomic measure, then it cannot be bounded away from zero.
			\item If $\mu$ is bounded  away from zero, then it is a purely atomic measure. \label{Th1.1.2}
		\end{enumerate}
	\end{theorem}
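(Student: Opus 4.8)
The plan is to prove both assertions by a single underlying mechanism: exhibiting measurable sets of arbitrarily small positive measure, which by definition contradicts being bounded away from zero. Recall that $\mu$ fails to be bounded away from zero precisely when, for every $\lambda>0$, there exists $A\in\mathscr{A}$ with $0<\mu(A)<\lambda$; so each part reduces to producing such sets.

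For part (1), I would assume $\mu$ is non-atomic and apply Sierpi\'{n}ski's result (Theorem \ref{tnonatomic}) directly with $A=X$. Since $\mu$ is a probability measure, $\mu(X)=1>0$, so for every $r\in[0,1]$ there is a set $A_r\in\mathscr{A}$ with $\mu(A_r)=r$. Then, given any candidate $\lambda>0$, choosing any $r$ with $0<r<\lambda$ yields $A_r$ with $0<\mu(A_r)<\lambda$, so no $\lambda$ can witness that $\mu$ is bounded away from zero. This disposes of part (1) at once.

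For part (2), I would argue by contraposition, assuming $\mu$ is not purely atomic and producing sets of arbitrarily small positive $\mu$-measure. By Theorem \ref{tdecompose}, write $\mu=\mu_1+\mu_2$ with $\mu_1$ purely atomic, $\mu_2$ non-atomic, and $\mu_2\mathcal{S}\mu_1$. Since $\mu$ is not purely atomic, $\mu_2$ is nonzero, whence $\mu_2(X)>0$. Applying Theorem \ref{tnonatomic} to $\mu_2$, for each $r\in[0,\mu_2(X)]$ there is $A_r\in\mathscr{A}$ with $\mu_2(A_r)=r$; the singularity $\mu_2\mathcal{S}\mu_1$ then furnishes $F_r\subseteq A_r$ with $\mu_2(F_r)=\mu_2(A_r)=r$ and $\mu_1(F_r)=0$, so that $\mu(F_r)=\mu_1(F_r)+\mu_2(F_r)=r$. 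Hence $[0,\mu_2(X)]\subseteq\mu(\mathscr{A})$, which is exactly the interval-containing conclusion already obtained in the proof of Theorem \ref{atomic}. Picking $r$ with $0<r<\min\{\lambda,\mu_2(X)\}$ again defeats every candidate $\lambda$, establishing the contrapositive.

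I do not expect a genuine obstacle, since the heavy lifting is carried by Sierpi\'{n}ski's theorem and Johnson's decomposition. The only step demanding care is the transfer in part (2): one must use the singularity of $\mu_2$ with respect to $\mu_1$ to pass from a set of prescribed $\mu_2$-measure to one of the same total $\mu$-measure, so that the small values realised by the non-atomic part are actually realised by $\mu$ itself rather than being absorbed by $\mu_1$. Once this transfer is secured, both parts collapse to the single observation that a measure attaining positive values below every positive threshold cannot be bounded away from zero.
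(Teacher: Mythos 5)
Your proposal is correct and follows essentially the same route as the paper: part (1) is Sierpi\'{n}ski's theorem applied to $X$, and part (2) is the contrapositive via Johnson's decomposition together with the transfer through $\mu_2\mathcal{S}\mu_1$, which is precisely the argument the paper imports from its proof of Theorem \ref{atomic}. No gaps.
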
 
	\begin{proof}
		\	\begin{enumerate}[label=\arabic*.]
			\item This follows from Theorem \ref{tnonatomic}.
			\item By Theorem \ref{tdecompose}, $\mu$ can be decomposed as $\mu=\mu_1+\mu_2$ with $\mu_1 \mathcal{S} \mu_2$ and $\mu_2 \mathcal{S} \mu_1$, where $\mu_1$ is purely atomic and $\mu_2$ is non-atomic. Assume that $\mu$ is not purely atomic. Then, $\mu_2$ is non-zero. Proceeding as in the proof of Theorem \ref{atomic}, $\mu(\mathscr{A})\supseteq [0,\mu_2(X)]$. Therefore, $\mu$ takes values arbitrarily close to zero and hence is not bounded away from zero.
		\end{enumerate}
	\end{proof}
	
	We note that not all purely atomic measures are bounded away from zero. Indeed, Example \ref{eg}(\ref{eg0}) defines a purely atomic measure which is not bounded away from zero. In fact, we observe something stronger.
	
	\begin{theorem} \label{tbdd}
		Let $(X,\mathscr{A},\mu)$ be a measure space. Then $\mu$ is bounded away from zero if and only if $\mu$ is purely atomic and $(X,\mathscr{A},\mu)$ contains at most finitely many pairwise disjoint atoms.
	\end{theorem}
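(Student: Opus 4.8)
The plan is to prove both implications, handling the two clauses of the composite condition (purely atomic; finitely many pairwise disjoint atoms) separately. For the forward direction I would assume $\mu$ is bounded away from zero with witnessing constant $\lambda>0$. Theorem \ref{Th1.1}(\ref{Th1.1.2}) immediately yields that $\mu$ is purely atomic, so only the finiteness of pairwise disjoint atoms remains to be shown. Here the idea is a counting argument against countable additivity: if $\{A_n\}$ were an infinite family of pairwise disjoint atoms, each would satisfy $\mu(A_n)\geq\lambda$, whence $\mu\left(\bigsqcup_n A_n\right)=\sum_n\mu(A_n)$ would exceed every finite bound, contradicting $\mu(X)=1$. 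In fact this shows any family of pairwise disjoint atoms has at most $\lfloor 1/\lambda\rfloor$ members, which is the desired finiteness.

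For the converse I would assume $\mu$ is purely atomic with only finitely many pairwise disjoint atoms and construct a suitable $\lambda$. Applying Theorem \ref{Johnson} to $E=X$ produces a countable pairwise disjoint family of atoms $\{E_k\}$ with $\sum_k\mu(E_k)=\mu(X)=1$; the finiteness hypothesis forces this family to be finite, say $\{E_1,\dots,E_m\}$, and since the $\mu(E_i)$ are finitely many strictly positive numbers I would set $\lambda=\min_{1\leq i\leq m}\mu(E_i)>0$. The crucial preliminary observation is that these atoms exhaust $X$ up to a null set, that is, $\mu\left(X\setminus\bigsqcup_{i=1}^m E_i\right)=0$, which follows from $\sum_{i=1}^m\mu(E_i)=\mu(X)$.

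The heart of the argument is then to verify that every $A\in\mathscr{A}$ with $\mu(A)>0$ satisfies $\mu(A)\geq\lambda$. Writing $\mu(A)=\sum_{i=1}^m\mu(A\cap E_i)$, which is valid because the complement of $\bigsqcup_i E_i$ is null, I would invoke the defining property of an atom applied to each $E_i$ with test set $A$: either $\mu(A\cap E_i)=0$ or $\mu(E_i\setminus A)=0$, so each summand is either $0$ or the full value $\mu(E_i)$. Since $\mu(A)>0$, at least one summand must equal some $\mu(E_i)>0$, giving $\mu(A)\geq\mu(E_i)\geq\lambda$, as required.

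The main obstacle I anticipate lies in the converse, precisely at the step just described: one must rule out the possibility that a positive-measure set "spreads thinly" and evades the lower bound. The resolution hinges on two facts working together -- that the finite atom family covers $X$ modulo a null set, and that the atomicity dichotomy forces any positive-measure intersection with an atom to capture that atom's entire mass. Once these are in place, $\lambda=\min_i\mu(E_i)$ is evidently the optimal choice, and by comparison the forward direction's counting estimate is routine.
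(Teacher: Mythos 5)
Your proof is correct and follows essentially the same route as the paper's: Theorem \ref{Th1.1}(\ref{Th1.1.2}) plus a counting argument against countable additivity for the forward direction, and Theorem \ref{Johnson} applied to $X$ with $\lambda=\min_i\mu(E_i)$ for the converse. The only (harmless) difference is in the final verification step of the converse: the paper reruns the argument of Theorem \ref{atomic}, re-invoking Johnson's theorem on an arbitrary positive-measure set $E$, whereas you decompose $\mu(A)=\sum_{i=1}^m\mu(A\cap E_i)$ directly and apply the atom dichotomy, which is slightly more self-contained but amounts to the same idea.
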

	\begin{proof}
		Assume that $\mu$ is bounded away from zero. That it is purely atomic follows from Theorem \ref{Th1.1}(\ref{Th1.1.2}). Now, let $\lambda>0$ be such that for all positive measurable sets $A\in \mathscr{A}$, $\mu(A)\geq \lambda$. If possible let there are infinitely many pairwise disjoint atoms in the measure space. By Theorem \ref{Johnson}, there exists a countably infinite collection of pairwise disjoint atoms $\{E_n\colon n\in \mathbb{N} \}$ such that $\mu(X)=\mu(\bigsqcup\limits_{n\in \mathbb{N}}E_n)=\sum\limits_{n\in \mathbb{N}}\mu(E_n)$. But $\mu(E_n)\geq \lambda$ for each $n\in \mathbb{N}$ and so the series $\sum\limits_{n\in \mathbb{N}}\mu(E_n)$ diverges to infinity, which contradicts that $\mu(X)=1$.
		
		Conversely, let $\{E_i\colon i=1,2,\cdots, n \}$,  $n\in \mathbb{N}$ be a collection of pairwise disjoint atoms such that $\mu(X\setminus \bigsqcup\limits_{i=1}^nE_i)=0$. Let $\lambda=\min \{\mu(E_i)\colon i=1,2,\cdots, n \}$. Then $\lambda>0$. Now, proceeding as in the proof of Theorem \ref{atomic}, for each $E\in \mathscr{A}$ with $\mu(E)>0$, $\mu(E)=\sum\limits_{i\in A}\mu(E_i)$ where $A$ is a non-empty subset of $\{ 1,2,\cdots,n\}$ and so $\mu(E)\geq \lambda$.  Thus, $\mu$ is bounded away from zero.
	\end{proof}
	
	Having established the necessary measure theoretic framework, we discuss a few preliminary properties of the space $\mathcal{M}_\mu$. First of all, it has been mentioned that this space can be induced through the rank function $N_\mu$. We shall verify this assertion. In order to do that, we revisit the notion of Cauchy in measure.
	
	\begin{definition} \cite{Folland}
		A sequence $\{f_n\}$ in $\mathcal{M}(X,\mathscr{A},\mu)$ is said to be \textbf{Cauchy in measure} provided for each natural number $k$ and each $\epsilon>0$, there exists $N\in \mathbb{N}$ such that for all $n,m\geq N$, \[\mu(\{x\in X\colon |f_n(x)-f_m(x)|> \frac{1}{k} \})<\epsilon. \] 
	\end{definition}
	
	Pertaining to the above definition, we reserve the notation $[f,g,k]$ to denote $\{x\in X\colon |f(x)-g(x)|>\frac{1}{k} \}$, for $f,g\in \mathcal{M}(X,\mathscr{A},\mu)$ and $k\in \mathbb{N}$. It follows that $X\setminus Z(f-g)=\bigcup\limits_{k\in \mathbb{N}}[f,g,k]$, for any  $f,g\in \mathcal{M}(X,\mathscr{A},\mu)$. Moreover, $\{[f,g,k] \}_{k=1}^\infty$ is an increasing sequence of measurable sets and hence, \[\delta(f,g)=\mu(X\setminus Z(f-g))=\lim\limits_{k\rightarrow \infty} \mu([f,g,k]). \] These observations lead us to the equivalence between the concepts of convergence (or, Cauchy) in measure and in the metric space induced by the rank function $N_\mu$. To achieve this efficiently, we first recall the following result.
	
	\begin{lemma} \cite[Theorem 2.30]{Folland} \label{lemma_complete}
		If a sequence $\{f_n\}$, in $\mathcal{M}$, is Cauchy in measure, there exists $f\in \mathcal{M}$ such that $\{f_n\}\rightarrow f$ in measure.
	\end{lemma}
	
	For the sake of convenience, we shall denote the metric space induced by the rank function $N_\mu$ by $(\mathcal{M},N_\mu)$.
	
	\begin{theorem} \label{tconvergence_equivalence}
		Suppose $\{f_n\}$ is a sequence in $\mathcal{M}$ and $f\in \mathcal{M}$. Then
		\begin{enumerate}[label=\arabic*.]
			\item $\{f_n\}$ is Cauchy in $(\mathcal{M},N_\mu)$ if and only if it is Cauchy in measure.
			\item $\{f_n\}$ converges to $f$ in $(\mathcal{M},N_\mu)$ if and only if $\{f_n\}\rightarrow f$ in measure.
		\end{enumerate}
	\end{theorem}
	
	\begin{proof}
		Note that $[f,g,k]\subseteq X\setminus Z(f-g)$, for any $f,g\in \mathcal{M}$ and $k\in \mathbb{N}$. Therefore, it is evident that if $\{f_n\}$ is Cauchy in $(\mathcal{M},N_\mu)$ (resp. converges to $f$ in $(\mathcal{M},N_\mu)$), then it is Cauchy in measure (resp. $\{f_n\}\rightarrow f$ in measure). 
		
		Now, suppose $\{f_n\}\rightarrow f$ in measure. Then, for each $k\in \mathbb{N}$, $\lim\limits_{n\rightarrow \infty}\mu([f_n,f,k])=0$. Recall that for each $n\in \mathbb{N}$, \[0\leq \mu([f_n,f,1])\leq \mu([f_n,f,2])\leq \cdots \rightarrow \mu(X\setminus Z(f_n-f)). \] Consequently, $\lim\limits_{n\rightarrow \infty}\mu(X\setminus Z(f_n-f))=0$, i.e., $\delta(f_n,f)\rightarrow 0$ and so $\{f_n\}$ is convergent in $(\mathcal{M},N_\mu)$.
		
		Finally, suppose $\{f_n\}$ is Cauchy in measure. Then by Lemma \ref{lemma_complete}, there exists $f\in \mathcal{M}$ such that $\{f_n\}\rightarrow f$ in measure, and so $\{f_n\}$ converges to $f$ in the space $(\mathcal{M},N_\mu)$. As $(\mathcal{M},N_\mu)$ is a metric space, it follows that $\{f_n\}$ is Cauchy in $(\mathcal{M},N_\mu)$.
	\end{proof}
	
	That the notions of `Cauchy in measure' (resp.\ `convergence in measure') and `Cauchy in $\mathcal{M}_\mu$' (resp.\ `convergence in $\mathcal{M}_\mu$') are equivalent is evident. The aforementioned equivalence between $\mathcal{M}_\mu$ and $(\mathcal{M},N_\mu)$ shall now be established. Note that, as usual, $B(f,\epsilon)=\{g\in \mathcal{M}\colon \delta(f,g)<\epsilon \}$, for any $f\in \mathcal{M}$ and $\epsilon>0$. Also, whenever $\epsilon>1$, $B(f,\epsilon)=\mathcal{M}$. So, without loss of generality, we shall always assume $\epsilon\leq 1$.
	
	\begin{theorem} \label{tConvergenceTopology=rankfunctionMetric}
		The topology of convergence in measure $\mathcal{M}_\mu$ is induced by the rank function $N_\mu$. 
	\end{theorem}
	
	\begin{proof}
		Suppose $X\setminus A$ is an open set in the topology of pointwise convergence, where $A=\overline{A}$ and let $f\in X\setminus A$. We assert that there exists $n\in \mathbb{N}$ such that $B(f,\frac{1}{n})\subseteq X\setminus A$. If not, then for each $n\in \mathbb{N}$, there is an $f_n\in B(f,\frac{1}{n})\cap A$. By Theorem \ref{tconvergence_equivalence}, $\{f_n\}\rightarrow f$ in measure. Since $\{f_n\}\subseteq A$, $f\in \overline{A}=A$, which is a contradiction. 
		
		Now, let $f\in \mathcal{M}$ and $\epsilon\in (0,1]$. To see that $B:=B(f,\epsilon)$ is open in $\mathcal{M}_\mu$, we need to show that $X\setminus B=\overline{X\setminus B}$. Let $g\in \overline{X\setminus B}$. Then, there exists a sequence $\{g_n\}\subseteq X\setminus B$ such that $\{g_n\}\rightarrow g$ in measure. By Theorem \ref{tconvergence_equivalence}, $\{g_n\}$ converges to $g$ in $(\mathcal{M},N_\mu)$. Consequently, $g\in X\setminus B$. Therefore, $B$ is open in the space $\mathcal{M}_\mu$.
	\end{proof}
	
	The subsequent result is an immediate consequence of the above theorem.
	
	\begin{corollary} \label{tMetrizable}
		The topology of convergence in measure, $\mathcal{M}_\mu$ is induced by the metric $\delta$, defined as: \[\delta(f,g)=\mu(X\setminus Z(f-g)), \text{ for }f,g\in \mathcal{M}. \] Consequently, $\mathcal{M}_\mu$ a metrizable space. 
	\end{corollary}
	
	In light of Lemma \ref{lemma_complete}, one can further observe that the space $\mathcal{M}$ is completely metrizable.
	
	\begin{theorem} \label{tcompletely_metrizable}
		The ring $\mathcal{M}$, equipped with the topology of convergence in measure is completely metrizable. In particular, $\mathcal{M}_\mu \equiv (\mathcal{M},\delta)$ is a complete metric space.
	\end{theorem}
	
	The next corollary can be deduced by unifying \cite[Theorem 3.9.3,\ Theorem 4.3.26]{Engelking} and the above theorem. 
	
	\begin{corollary}
		The space $\mathcal{M}_\mu$ possesses the following properties:
		\begin{enumerate}[label=\arabic*.]
			\item $\mathcal{M}_\mu$ is a \v{C}ech-complete space.
			\item $\mathcal{M}_\mu$ is a Baire space.
		\end{enumerate}
	\end{corollary}
	
	At this point, note that if $\mu$ is bounded away from zero by a positive real number $\lambda$, then for any $\epsilon\in (0,\lambda)$ and any $f\in \mathcal{M}$, $B(f,\epsilon)=\{f \}$. Therefore, the metric space $\mathcal{M}_\mu$ reduces to the discrete space, whenever $\mu$ is bounded away from zero. Additionally, the converse also holds.
	
	\begin{theorem} \label{tDiscrete}
		The space $\mathcal{M}_\mu$ is discrete if and only if the underlying measure $\mu$ is bounded away from zero.
	\end{theorem}
	
	\begin{proof}
		That $\mathcal{M}_\mu$ is a discrete space when $\mu$ is bounded away from zero, has already been discussed. So, suppose $\mathcal{M}_\mu$ is the discrete space. Then $\{\boldsymbol{0}\}$ is open in $\mathcal{M}_\mu$ and so, there exists $\lambda>0$ such that $B(\boldsymbol{0},\lambda)=\{\boldsymbol{0} \}$. Now, let $A$ be a measurable set with $\mu(A)>0$. Then $\chi_A\notin B(\boldsymbol{0},\lambda)$, as $\mu(A)=\delta(\chi_A,\boldsymbol{0})$. This ensures that $\mu(A)\geq \lambda$. 
	\end{proof}
	
	Since $\mathcal{M}_\mu$ is a metric space, the following corollary is immediate.
	
	\begin{corollary} \label{tPSpace}
		The following statements are equivalent.
		
		\begin{enumerate}[label=\arabic*.]
			\item $\mu$ is bounded away from zero.
			\item $\mathcal{M}_\mu$ is a $P$-space.
			\item $\mathcal{M}_\mu$ is extremally disconnected.
		\end{enumerate}
	\end{corollary}
	
	Now, we recall that $\mathcal{M}$ equipped with the $u_\mu$-topology is not, in general, a topological ring (see \cite{RakeshDa}). This brings out a contrast between the well-known $u_\mu$-topology on $\mathcal{M}$ and the space $\mathcal{M}_\mu$, as the latter always forms a topological ring. We next observe a noteworthy difference between the $m_\mu$-topology and the space $\mathcal{M}_\mu$. Recall that a function $f\in \mathcal{M}$ is a unit if and only if $\mu(Z(f))=0$. Let $U_\mu$ denote the set of all units in $\mathcal{M}$. Furthermore, it is observed in \cite[Theorem 3.2]{SBag} that $U_\mu$ is open in the $m_\mu$-topology on $\mathcal{M}$. On the other hand, we observe that for a large class of measure spaces, the collection $U_\mu$ fails to be open the space $\mathcal{M}_\mu$.
	
	\begin{theorem} \label{t3.1}
		Let $\mu$ be a non-atomic measure on a measurable space $(X,\mathscr{A})$. Then $U_\mu$ is not an open set in $\mathcal{M}_\mu$.
	\end{theorem}
	\begin{proof}
		Let $f\in U_\mu$ and $\epsilon\in (0,1]$ be chosen arbitrarily. Then $\mu(Z(f))=0$ and so, $\mu(X\setminus Z(f))=1$. Since $\mu$ is non-atomic, there exists a measurable set $A\subseteq X\setminus Z(f)$ such that $\mu(A)=\frac{\epsilon}{2}$. Define $g\colon X\longrightarrow \mathbb{R}$ as $g(x)=\begin{cases}
			0, &x\in A\\
			f(x), &x\notin A
		\end{cases}.$ Therefore, $\mu(Z(g))=\mu(A)\neq 0$ which implies that $g\notin U_\mu$. Now, $X\setminus Z(f-g)= A$ and so $\mu(X\setminus Z(f-g))= \frac{\epsilon}{2}<\epsilon$. Thus, $B(f,\epsilon)\nsubseteq U_\mu$.
	\end{proof}
	
	In fact, the openness of $U_\mu$ in $\mathcal{M}_\mu$ characterises the measure $\mu$ as can be seen in the next result.
	
	\begin{theorem} \label{t3.2}
		Let $(X,\mathscr{A},\mu)$ be a measure space. Then $\mu$ is bounded away from zero if and only if $U_\mu$ is an open set in $\mathcal{M}_\mu$.
	\end{theorem}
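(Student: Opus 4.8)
The plan is to prove both implications directly, reusing the characteristic-function constructions that already appear in Example \ref{r2.1} and Theorem \ref{t3.1}.

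For the forward direction, suppose $\mu$ is bounded away from zero, witnessed by some $\lambda>0$. I would fix an arbitrary $f\in U_\mu$ and show that $B(f,\lambda)\subseteq U_\mu$. The crucial observation is the set-theoretic inclusion $Z(g)\cap Z(f-g)\subseteq Z(f)$: if both $g$ and $f-g$ vanish at a point, then so does $f$. Since $f\in U_\mu$ gives $\mu(Z(f))=0$, this forces $\mu(Z(g)\cap Z(f-g))=0$. Writing $Z(g)=(Z(g)\cap Z(f-g))\cup(Z(g)\setminus Z(f-g))$ and noting $Z(g)\setminus Z(f-g)\subseteq X\setminus Z(f-g)$, subadditivity of $\mu$ yields $\mu(Z(g))\leq \mu(X\setminus Z(f-g))=\delta(f,g)$. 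Hence for every $g\in B(f,\lambda)$ we obtain $\mu(Z(g))<\lambda$, and the hypothesis that $\mu$ is bounded away from zero forces $\mu(Z(g))=0$, i.e.\ $g\in U_\mu$. This establishes openness of $U_\mu$.

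For the converse I would argue by contraposition. Assume $\mu$ is not bounded away from zero, so that for each $\lambda>0$ there exists a measurable set of positive measure strictly below $\lambda$. Consider the unit $\boldsymbol{1}\in U_\mu$. If $U_\mu$ were open, there would be some $\epsilon>0$ with $B(\boldsymbol{1},\epsilon)\subseteq U_\mu$. Choosing $A\in\mathscr{A}$ with $0<\mu(A)<\epsilon$ and setting $g=\chi_{X\setminus A}$, one checks that $\boldsymbol{1}-g=\chi_A$, so that $X\setminus Z(\boldsymbol{1}-g)=A$ and therefore $\delta(\boldsymbol{1},g)=\mu(A)<\epsilon$; on the other hand $Z(g)=A$ has positive measure, whence $g\notin U_\mu$. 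This contradicts $B(\boldsymbol{1},\epsilon)\subseteq U_\mu$, so $U_\mu$ cannot be open.

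The whole argument is essentially bookkeeping with zero sets. The single substantive ingredient is the inclusion $Z(g)\cap Z(f-g)\subseteq Z(f)$ combined with the choice $\epsilon=\lambda$, which is precisely where the bounded-away-from-zero constant is consumed to convert ``$\mu(Z(g))<\lambda$'' into ``$\mu(Z(g))=0$''. I do not expect any real obstacle: the reverse direction is a clean contradiction modeled on the non-atomic construction of Theorem \ref{t3.1}, needing only the existence of sets of arbitrarily small positive measure, which is exactly the negation of being bounded away from zero.
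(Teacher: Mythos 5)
Your proof is correct and follows essentially the same route as the paper: the forward direction rests on the same zero-set inclusion (the paper writes it as $Z(g)\subseteq Z(f)\cup(X\setminus Z(f-g))$, you phrase it via $Z(g)\cap Z(f-g)\subseteq Z(f)$ plus subadditivity, which is the same estimate), and the converse uses the identical construction $g=\chi_{X\setminus A}$ against the point $\boldsymbol{1}$. The only cosmetic difference is that you apply the bounded-away-from-zero constant directly to $\mu(Z(g))$ rather than first to $\mu(X\setminus Z(f-g))$; both are valid.
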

	\begin{proof}
		Let $\mu$ be bounded away from zero. Then it follows from Theorem \ref{tDiscrete} that $U_\mu$ is open in $\mathcal{M}_\mu$. 
		
		Conversely, let $U_\mu$ be open. If possible let for each $\epsilon>0$, there exists $A_\epsilon\in \mathscr{A}$ such that $0<\mu(A_\epsilon)<\epsilon$. Consider the point $\boldsymbol{1}\in U_\mu$. Then $B(\boldsymbol{1},\epsilon)\subseteq U_\mu$ for some $\epsilon>0$. Define $g\colon X\longrightarrow \mathbb{R}$ as follows: \[g(x)=\begin{cases}
			0 &if\;x\in A_\epsilon \\
			1 &otherwise
		\end{cases}. \] Then $g\in B(\boldsymbol{1},\epsilon)$. But $\mu(Z(g))=\mu(A_\epsilon)>0$ which ensures that $g\notin U_\mu$. Thus, $B(\boldsymbol{1},\epsilon)\nsubseteq U_\mu$ which is a contradiction.
	\end{proof}
	
	Combining this observation with Theorem \ref{tDiscrete}, we have the following corollary.
	
	\begin{corollary}
		The following statements are equivalent for a probability measure space $(X,\mathscr{A},\mu)$.
		
		\begin{enumerate}[label=\arabic*.]
			\item $U_\mu$ is an open set in $\mathcal{M}_\mu$.
			\item $\mu$ is bounded away from zero.
			\item $\mathcal{M}_\mu$ is a discrete space.
		\end{enumerate}
	\end{corollary}



	\section{Compactness and Lindel\"{o}fness in $\mathcal{M}_\mu$}

	We begin with a discussion of Lindel\"{o}fness in the space $\mathcal{M}_\mu$.  We first observe that 
	$\delta(\boldsymbol{r},\boldsymbol{s})=1$ whenever $r\neq s$ and $r,s\in \mathbb{R}$. Thus, if we consider an open cover $\{B(f,\frac{1}{4})\colon f\in \mathcal{M}_\mu \}$ of $\mathcal{M}_\mu$, it cannot have a countable subcover, as $\mathbb{R}$ is an uncountable set. Hence, $\mathcal{M}_\mu$ is never a Lindel\"{o}f space. In this context, recall that the notions of separability,  second countability and Lindel\"{o}fness are equivalent topological properties for a metrizable space. We unify these discussions in the following theorem.
	
	\begin{theorem} \label{tL}
		The following assertions hold for any measure space $(X,\mathscr{A},\mu)$.
		\begin{enumerate}[label=\arabic*.]
			\item \label{L1} $\mathcal{M}_\mu$ is not a Lindel\"{o}f space.
			\item \label{L2} $\mathcal{M}_\mu$ is not a separable space.
			\item \label{L3} $\mathcal{M}_\mu$ is not a second countable space.
		\end{enumerate}
	\end{theorem}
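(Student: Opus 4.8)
The plan is to prove all three assertions at once by first establishing the key fact that $\mathcal{M}_\delta$ is not Lindel\"{o}f, and then invoking the standard equivalence of Lindel\"{o}fness, separability and second countability in pseudometric spaces, which the text has already flagged. First I would make precise the observation sketched in the paragraph preceding the theorem: for distinct reals $r \neq s$, the constant functions satisfy $\boldsymbol{r} - \boldsymbol{s} = \boldsymbol{r-s}$, which vanishes nowhere, so $X \setminus Z(\boldsymbol{r} - \boldsymbol{s}) = X$ and hence $\delta(\boldsymbol{r}, \boldsymbol{s}) = \mu(X) = 1$. This exhibits an uncountable $\tfrac{1}{2}$-separated set $\{\boldsymbol{r} \colon r \in \mathbb{R}\}$ inside $\mathcal{M}_\delta$.

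For part~(\ref{L1}), I would consider the open cover $\mathcal{U} = \{B(f, \tfrac{1}{4}) \colon f \in \mathcal{M}_\delta\}$ of $\mathcal{M}_\delta$. Each basic ball $B(f, \tfrac{1}{4})$ can contain at most one constant function $\boldsymbol{r}$, since if $\boldsymbol{r}, \boldsymbol{s} \in B(f, \tfrac14)$ with $r \neq s$, the triangle inequality would force $1 = \delta(\boldsymbol{r}, \boldsymbol{s}) \leq \delta(\boldsymbol{r}, f) + \delta(f, \boldsymbol{s}) < \tfrac12$, a contradiction. Since each of the uncountably many constant functions must be covered by some member of $\mathcal{U}$, any subcover must contain at least $|\mathbb{R}|$-many distinct balls; in particular no countable subfamily can cover $\{\boldsymbol{r} \colon r \in \mathbb{R}\}$, let alone $\mathcal{M}_\delta$. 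Hence $\mathcal{U}$ admits no countable subcover and $\mathcal{M}_\delta$ is not Lindel\"{o}f.

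For parts~(\ref{L2}) and~(\ref{L3}), I would appeal to the classical fact that for pseudometric spaces the three properties of being Lindel\"{o}f, separable, and second countable are equivalent (this is exactly the remark recorded just before the theorem). Since $\mathcal{M}_\delta$ is a pseudometric space and fails to be Lindel\"{o}f by~(\ref{L1}), it can be neither separable nor second countable. If one preferred a self-contained argument, separability fails directly because the uncountable $1$-separated family $\{\boldsymbol{r}\}$ means any dense set must contain a distinct point in each disjoint ball $B(\boldsymbol{r}, \tfrac12)$ and so be uncountable; second countability then fails since a second countable pseudometric space is always separable.

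I do not anticipate a genuine obstacle here, as every ingredient is elementary. The only mild subtlety is that $\delta$ is merely a pseudometric rather than a metric, so one should phrase the separation argument in terms of balls and the triangle inequality rather than invoking metric uniqueness of limits; but the constant functions are pairwise at distance exactly $1$, which is robust enough that the pseudometric degeneracy (distinct functions at distance $0$) never interferes with the counting argument.
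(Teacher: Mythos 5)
Your proposal is correct and follows exactly the paper's own argument: the constant functions form an uncountable $1$-separated family, so the cover by balls of radius $\tfrac14$ has no countable subcover, and the remaining parts follow from the equivalence of Lindel\"{o}fness, separability and second countability for pseudometric spaces. You merely spell out the details (each ball containing at most one constant function via the triangle inequality) that the paper leaves implicit.
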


	The following corollary is immediate.
	
	\begin{corollary} \label{tcompact}
		$\mathcal{M}_\mu$ is not a compact space.
	\end{corollary}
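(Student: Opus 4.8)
The plan is to deduce non-compactness directly from the failure of the Lindel\"{o}f property established in Theorem~\ref{tL}(\ref{L1}). The key observation is the standard implication that every compact topological space is Lindel\"{o}f: if every open cover admits a finite subcover, then in particular every open cover admits a countable subcover. Taking the contrapositive, a space that fails to be Lindel\"{o}f cannot be compact. Since Theorem~\ref{tL}(\ref{L1}) asserts that $\mathcal{M}_\delta$ is not a Lindel\"{o}f space for any measure space $(X,\mathscr{A},\mu)$, it follows immediately that $\mathcal{M}_\delta$ is not compact.

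Concretely, I would invoke the very open cover used to prove Theorem~\ref{tL}(\ref{L1}), namely $\{B(f,\tfrac{1}{4})\colon f\in \mathcal{M}_\delta\}$. The crucial metric fact is that $\delta(\boldsymbol{r},\boldsymbol{s})=1$ whenever $r\neq s$, so the uncountable family of constant functions $\{\boldsymbol{r}\colon r\in \mathbb{R}\}$ is $1$-separated. Consequently no ball of radius $\tfrac{1}{4}$ contains two distinct constant functions, which already rules out a countable subcover (hence also a finite one). This gives a self-contained witness to non-compactness without even passing through the Lindel\"{o}f terminology, should one prefer a direct argument.

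I do not anticipate any genuine obstacle here, as the statement is a formal consequence of a theorem already in hand; the entire content is the one-line inference \emph{compact $\Rightarrow$ Lindel\"{o}f}, applied contrapositively. The only thing to be careful about is that Theorem~\ref{tL} is stated for arbitrary $(X,\mathscr{A},\mu)$, so the conclusion likewise holds with no hypothesis on the atomicity or boundedness of $\mu$; I would phrase the corollary's proof to make clear that it inherits this full generality. Thus the proof reduces to: by Theorem~\ref{tL}(\ref{L1}) the space $\mathcal{M}_\delta$ is not Lindel\"{o}f, and since every compact space is Lindel\"{o}f, $\mathcal{M}_\delta$ is not compact.
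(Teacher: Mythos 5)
Your proposal is correct and matches the paper exactly: the paper derives this corollary as an immediate consequence of Theorem~\ref{tL}(\ref{L1}) via the implication that every compact space is Lindel\"{o}f, using the same $1$-separated family of constant functions and the cover $\{B(f,\tfrac{1}{4})\colon f\in \mathcal{M}_\delta\}$.
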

	
	While we are on this subject, we aim to identify compact (and Lindel\"{o}f) subsets of $\mathcal{M}_\mu$. We first recall that if $\mu$ is bounded away from zero, then the space $\mathcal{M}_\mu$ is nothing but the discrete space. So, throughout this section, we shall assume that  $\mu$ is not bounded away from zero. 
	
	\begin{theorem} \label{tL2}
		Let $L$ be a Lindel\"{o}f subset of $\mathcal{M}_\mu$. Then $int\;L=\emptyset$. 
	\end{theorem}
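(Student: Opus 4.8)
The plan is to argue by contradiction, exploiting the fact that $\delta$ depends only on the support of a difference and not on the values the functions take. I would suppose $\mathrm{int}\,L\neq\emptyset$, fix $f\in\mathrm{int}\,L$ so that $B(f,\epsilon)\subseteq L$ for some $\epsilon\in(0,1]$, and then use the hypothesis that $\mu$ is not bounded away from zero to choose $A\in\mathscr{A}$ with $0<\mu(A)<\epsilon$. The whole argument hinges on manufacturing, inside the arbitrarily small ball $B(f,\epsilon)$, an uncountable family of functions that are pairwise a fixed positive distance apart.

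The key step is the construction $g_t=f+t\chi_A$ for $t\in\mathbb{R}$. For $t\neq s$ we have $g_t-g_s=(t-s)\chi_A$, whence $X\setminus Z(g_t-g_s)=A$ and therefore $\delta(g_t,g_s)=\mu(A)$; in particular $\delta(f,g_t)=\mu(A)<\epsilon$, so every $g_t$ lies in $B(f,\epsilon)\subseteq L$. Writing $\eta:=\mu(A)>0$, the set $D=\{g_t:t\in\mathbb{R}\}$ is thus an uncountable subset of $L$ whose points are pairwise at distance exactly $\eta$. Since $B(g_t,\eta/2)\cap D=\{g_t\}$ for each $t$, the subspace $D$ is discrete as well as uncountable.

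To reach the contradiction I would invoke the fact, recalled just before Theorem \ref{tL}, that Lindel\"{o}fness, separability and second countability coincide for a pseudometric space. As $L$ with the restriction of $\delta$ is itself a pseudometric space, $L$ being Lindel\"{o}f forces it to be second countable. Second countability is hereditary, so the subspace $D$ is second countable and hence Lindel\"{o}f. But an uncountable discrete space is never Lindel\"{o}f, since its cover by singletons admits no countable subcover. This contradiction yields $\mathrm{int}\,L=\emptyset$.

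I expect the only genuine obstacle to be the production of an uncountable, \emph{uniformly} separated family inside an arbitrarily small ball. A naive attempt using indicator functions $\chi_{\bigcup_{n\in S}E_n}$ of unions of disjoint small sets $E_n$ fails, because $\mu(E_n)\to 0$ and the resulting family, though of cardinality $2^{\aleph_0}$, is not uniformly separated. The device $g_t=f+t\chi_A$ circumvents this precisely because $\delta$ records only \emph{where} two functions differ, not by how much: a single positive-measure set $A$ with $\mu(A)<\epsilon$ already produces a continuum of pairwise equidistant functions, and the assumption that $\mu$ is not bounded away from zero is exactly what supplies such an $A$ for every $\epsilon>0$.
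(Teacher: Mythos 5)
Your proposal is correct and uses essentially the same construction as the paper: the family $f+t\chi_A$ (the paper uses $f-r\chi_A$) giving uncountably many points of $B(f,\epsilon)\subseteq L$ at pairwise distance exactly $\mu(A)>0$. The only cosmetic difference is the final step: you pass through hereditary second countability of a Lindel\"{o}f pseudometric space, whereas the paper directly exhibits the open cover of $L$ by balls of radius $\mu(A)/4$, which admits no countable subcover; both are valid.
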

	
	\begin{proof}
		If possible let there exist $f\in int\; L$. Then $B(f,\epsilon)\subseteq L$ for some $\epsilon\in (0,1]$. Since $\mu$ is not bounded away from zero, there exists $A\in \mathscr{A}$ such that $0<\mu(A)<\epsilon$. For each $r\in \mathbb{R}\setminus \{0\}$, define $f_r\colon X\longrightarrow \mathbb{R}$ as $f_r(x)=\begin{cases}
			f(x)-r &if\;x\in A \\
			f(x) &if\;x\notin A
		\end{cases}$. Then, $\delta(f,f_r)=\mu(A)<\epsilon$ and so $f_r\in B(f,\epsilon)\subseteq L$ for all $r\neq 0$. Also, $\delta(f_r,f_s)=\mu(A)$, whenever $r\neq s$. Hence, the open cover $\{B(g,\frac{\mu(A)}{4})\colon g\in L \}$ of $L$ has no countable subcover, as each $f_r$ lies in exactly one member of the cover.
	\end{proof}
	
	The following observations are immediate.
	\begin{corollary} \label{ccompact}
		Let  $K$ be a compact subset of $\mathcal{M}_\mu$. Then $int\;K=\emptyset$. 
	\end{corollary}

	\begin{corollary} \label{clocal}
		$\mathcal{M}_\mu$ is locally compact if and only if $\mu$ is bounded away from zero.
	\end{corollary}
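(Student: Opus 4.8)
The plan is to derive this characterisation directly from two facts already in hand: that each equivalence class $I_f$ is simultaneously open and compact precisely when $\mu$ is bounded away from zero, and that compact subsets must have empty interior when $\mu$ fails to be bounded away from zero.

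First I would prove the forward implication. Suppose $\mu$ is bounded away from zero. By Theorem \ref{t6.1}, each $I_f$ is open in $\mathcal{M}_\delta$. Next I would verify that each $I_f$ is compact: given any open cover of $I_f$, every member is open, so Lemma \ref{l1} forces any member meeting $I_f$ to contain all of $I_f$; thus a single member already covers $I_f$, and $I_f$ is compact. Being open, compact, and containing $f$, the set $I_f$ is a compact neighbourhood of $f$. Since $f$ was arbitrary, $\mathcal{M}_\delta$ is locally compact.

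For the converse I would argue contrapositively. Assume $\mu$ is not bounded away from zero, and suppose towards a contradiction that $\mathcal{M}_\delta$ is locally compact. Fixing any $f\in\mathcal{M}_\delta$, local compactness yields a compact neighbourhood $K$ of $f$, so in particular $f\in int\;K$ and hence $int\;K\neq\emptyset$. But Corollary \ref{ccompact} guarantees that every compact subset of $\mathcal{M}_\delta$ has empty interior in this case, contradicting $int\;K\neq\emptyset$. Therefore $\mathcal{M}_\delta$ cannot be locally compact, which completes the equivalence.

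The argument is essentially bookkeeping, so there is no serious obstacle. The only point demanding genuine care is the compactness of $I_f$: this does not come from any general metric principle but from the rigidity supplied by Lemma \ref{l1}, namely that every open set either contains $I_f$ entirely or misses it. Closing the converse then amounts to matching the definition of local compactness against Corollary \ref{ccompact} correctly, by observing that a compact neighbourhood of $f$ must place $f$ in its interior and hence cannot have empty interior.
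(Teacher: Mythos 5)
Your proposal is correct and follows the paper's proof essentially verbatim: openness of each $I_f$ from Theorem \ref{t6.1}, compactness of $I_f$ from Lemma \ref{l1} (you spell out the one-member-covers argument that the paper calls ``evident''), and the converse via Corollary \ref{ccompact}. No differences worth noting.
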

	
	\begin{proof}
		If $\mu$ is bounded away from zero, then it follows from Theorem \ref{tDiscrete}  that every subset of $\mathcal{M}$ is open and so, $\mathcal{M}_\mu$ is locally compact.  The converse follows from Corollary \ref{ccompact}.
	\end{proof}

	If $\mu$ is a non-atomic measure, then it cannot be bounded away from zero. The following corollary is thus immediate from Theorem \ref{tL2}.
	\begin{corollary} \label{cL}
		Let $\mu$ be a non-atomic measure and $L$, a Lindel\"{o}f (resp.\ compact) subset of $\mathcal{M}_\mu$. Then $int\;L=\emptyset$. 
	\end{corollary}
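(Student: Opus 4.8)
The plan is to treat this as an immediate specialisation of Theorem \ref{tL2}, the only new ingredient being the passage from non-atomicity to the failure of being bounded away from zero. First I would invoke Theorem \ref{Th1.1}(1): a non-atomic measure cannot be bounded away from zero. This reduction rests on Sierpi\'nski's Theorem \ref{tnonatomic}, whose intermediate-value conclusion guarantees measurable sets of every value in $[0,\mu(A)]$, in particular sets of arbitrarily small positive measure, which is precisely the negation of boundedness away from zero.

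With this reduction in hand, the Lindel\"of case is nothing more than a direct application of Theorem \ref{tL2} to the set $L$, yielding $int\;L=\emptyset$. For the compact case one may either appeal to Corollary \ref{ccompact} directly, or note that compactness implies Lindel\"ofness in any pseudometric space and fall back on the Lindel\"of statement; either route closes the ``resp.'' clause without further work.

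I do not expect any genuine obstacle, since all the substantive content lives in Theorem \ref{tL2}: there, for a hypothetical interior point $f\in int\;L$ with $B(f,\epsilon)\subseteq L$, one fixes a measurable set $A$ with $0<\mu(A)<\epsilon$ and builds an uncountable family $\{f_r\}_{r\neq 0}$ whose members are pairwise $\delta$-separated by the fixed positive quantity $\mu(A)$, producing an open cover of $L$ with no countable subcover. The present corollary merely records that a non-atomic $\mu$ automatically satisfies the hypothesis of that theorem, so no separate construction is required here.
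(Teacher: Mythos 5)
Your proposal is correct and matches the paper's own route exactly: the paper likewise observes that a non-atomic measure cannot be bounded away from zero (Theorem \ref{Th1.1}) and then declares the corollary immediate from Theorem \ref{tL2} (with Corollary \ref{ccompact} handling the compact case). No further comment is needed.
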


	It is obvious that if $\mu$ is bounded away from zero, then the only Lindel\"{o}f (resp. compact) subsets of $\mathcal{M}_\mu$ are the countable (resp. finite) sets. The converse also holds for compact subsets of $\mathcal{M}_\mu$ as has been established in the following theorem.
	
	\begin{theorem} \label{tcompact2}
		Let $\mu$ be not bounded away from zero. Then $\mathcal{M}_\mu$ contains an infinite compact set.
	\end{theorem}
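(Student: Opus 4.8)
The plan is to realise $K$ as a convergent sequence together with its limit, since such a set is automatically compact in any (pseudo)metric space, and to arrange that the terms of the sequence fall into pairwise distinct classes $I_f$. The guiding observation is that a probability space can carry only ``few'' large disjoint sets: if $\{F_n\}_{n\in\mathbb{N}}$ is any sequence of pairwise disjoint measurable sets, then $\sum_n \mu(F_n)\le \mu(X)=1$, so necessarily $\mu(F_n)\to 0$. Hence the whole problem reduces to producing infinitely many pairwise disjoint measurable sets of positive measure, after which convergence of the corresponding characteristic functions comes for free.

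First I would establish, as the key lemma, that since $\mu$ is not bounded away from zero there exist pairwise disjoint $F_1,F_2,\dots\in\mathscr{A}$ with $\mu(F_n)>0$ for every $n$. By Theorem~\ref{tbdd}, failing to be bounded away from zero forces $\mu$ to be either not purely atomic, or purely atomic with infinitely many pairwise disjoint atoms. In the latter case Theorem~\ref{Johnson} supplies a countable pairwise disjoint family of atoms, which must be infinite, and these serve as the $F_n$. In the former case, writing $\mu=\mu_1+\mu_2$ as in Theorem~\ref{tdecompose}, the non-atomic part $\mu_2$ is non-zero; applying Sierpi\'nski's theorem (Theorem~\ref{tnonatomic}) to $\mu_2$ repeatedly, I would carve out sets $F_n$ inside the successive remainders with $\mu_2(F_n)=\mu_2(X)/2^{\,n}>0$, so that the $F_n$ are pairwise disjoint and $\mu(F_n)\ge\mu_2(F_n)>0$.

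With the $F_n$ in hand, set $f_n=\chi_{F_n}$ and put $K=\{f_n\colon n\in\mathbb{N}\}\cup\{\boldsymbol 0\}$. Since $\delta(f_n,\boldsymbol 0)=\mu(X\setminus Z(f_n))=\mu(F_n)\to 0$, the sequence converges to $\boldsymbol 0$; consequently any open cover of $K$ has a member containing $\boldsymbol 0$, hence all but finitely many $f_n$, and finitely many further members catch the rest, so $K$ is compact. Finally, for $n\ne m$ we have $X\setminus Z(f_n-f_m)=F_n\sqcup F_m$, whence $\delta(f_n,f_m)=\mu(F_n)+\mu(F_m)>0$; thus the $f_n$ lie in pairwise distinct classes, and $K$ meets infinitely many $I_f$'s, as required.

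The main obstacle is precisely the key lemma, and I expect the non-purely-atomic case to demand the decomposition of Theorem~\ref{tdecompose} together with Sierpi\'nski's theorem rather than a one-line argument, since a naive attempt to split $X$ iteratively could stall at atoms; routing the construction through the non-atomic part $\mu_2$ avoids this. Everything else, namely the automatic decay $\mu(F_n)\to 0$, compactness of a convergent sequence with its limit, and the distinctness of the classes, is routine.
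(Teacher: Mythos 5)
Your proof is correct, and its endgame is the same as the paper's: a compact set of the form $\{\chi_{F_n}\colon n\in\mathbb{N}\}\cup\{\boldsymbol 0\}$ where $\mu(F_n)\to 0$, which is compact as a convergent sequence with its limit and meets infinitely many $I_f$'s. Where you diverge is in how the sets are produced and how distinctness of the classes is certified. The paper works directly from the negation of ``bounded away from zero'': for every $\epsilon>0$ there is a set of measure in $(0,\epsilon)$, so one can choose $A_n$ with $\mu(A_n)$ lying in pairwise disjoint intervals $\bigl(\tfrac{1}{k_{n+1}},\tfrac{1}{k_n}\bigr)$. This needs no case analysis and no disjointness: the $I_{f_n}$ are automatically distinct because $\delta(f_n,f_m)=\mu(A_n\triangle A_m)\geq |\mu(A_n)-\mu(A_m)|>0$. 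You instead route the construction through Theorem \ref{tbdd}, the Johnson decomposition (Theorem \ref{tdecompose}) and Sierpi\'nski's theorem to manufacture \emph{pairwise disjoint} sets of positive measure, and then get the decay $\mu(F_n)\to 0$ for free from $\sum_n\mu(F_n)\leq 1$. That buys you a cleaner verification of distinctness ($\delta(f_n,f_m)=\mu(F_n)+\mu(F_m)$) and a structurally informative family of witnesses, at the price of heavier machinery; the paper's one-line selection of the $A_n$ is the more economical argument. Both are valid; the only point you leave slightly implicit is why the Johnson family of atoms must be infinite in the purely atomic case, but this is immediate from the converse direction of Theorem \ref{tbdd}, which you cite.
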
 
	
	\begin{proof}
		
		For each $n\in \mathbb{N}$, we associate $k_n\in \mathbb{N}$ and $A_n\in \mathscr{A}$ inductively as follows: $A_1\in \mathscr{A}$ is such that $0<\mu(A_1)<1$ and $k_1=1$. Then there exists $k_2\geq 2$ such that $\frac{1}{k_2}<\mu(A_1)$, $A_2\in \mathscr{A}$ is chosen such that $0<\mu(A_2)<\frac{1}{k_2}$. Continuing this process inductively, we have an increasing sequence $\{k_n\}\subseteq \mathbb{N}$ and a sequence of measurable sets $\{A_n\}$ such that $k_n\geq n$ and $\frac{1}{k_{n+1}}<\mu(A_n)<\frac{1}{k_n}$ for each $n\in \mathbb{N}$. Clearly, $\lim\limits_{n \rightarrow \infty} k_n=0$ and so $\{\frac{1}{k_n}\colon n\in \mathbb{N} \}\cup \{0\}$ is a compact set.

		With each $n\in \mathbb{N}$, we associate a function $f_n\in \mathcal{M}_\mu$, defined as $f_n(x)=\begin{cases}
			1 &if\;x\in A_n\\
			0 &otherwise
		\end{cases}$. Let $K=\{f_n\colon n\in \mathbb{N} \}\cup \{\boldsymbol{0} \}$. We now assert that the function $\phi\colon \{\frac{1}{k_n}\colon n\in \mathbb{N} \}\cup \{0\} \longrightarrow K$, defined as $\phi(\frac{1}{k_n})=f_n$ for each $n\in \mathbb{N}$ and $\phi(0)=\boldsymbol{0}$ is a continuous bijection. Indeed, it is clear that for each open neighbourhood $\{f_n\colon n\geq m \}\cup \{{\boldsymbol{0}} \}$ of $\boldsymbol{0}=\phi(0)$,  $\phi(\{\frac{1}{k_n}\colon n\geq m \}\cup \{{0} \})=\{f_n\colon n\geq m \}\cup \{{\boldsymbol{0}} \}$ and $\{\frac{1}{k_n}\colon n\geq m \}\cup \{{0} \}$ is a neighbourhood of $0$. Thus, K is a compact set in $\mathcal{M}_\mu$. Also, it is clear that  $K$ is an infinite set.
	\end{proof}
	
	Thus, we can unite the above discussions as follows.
	
	\begin{theorem} \label{ccompact2}
		$\mu$ is bounded away from zero if and only if each compact set in $\mathcal{M}_\mu$ is at most finite.
	\end{theorem}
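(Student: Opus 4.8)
The plan is to prove the equivalence by splitting it into two implications, each of which is essentially a repackaging of a result already established earlier in this section. For the forward direction, I would assume that $\mu$ is bounded away from zero and appeal directly to Theorem \ref{c5.8}. Since that result asserts that a subset $L$ of $\mathcal{M}_\delta$ is compact if and only if $L$ meets at most finitely many distinct $I_f$'s, its ``only if'' half immediately delivers that every compact set meets at most finitely many $I_f$'s. No further work is needed for this direction.

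For the converse, I would argue contrapositively. Suppose $\mu$ is \emph{not} bounded away from zero; then Theorem \ref{tcompact2} produces a compact set $K\subseteq \mathcal{M}_\delta$ meeting infinitely many $I_f$'s. Consequently the condition ``every compact set meets at most finitely many $I_f$'s'' fails. Taking the contrapositive then shows that this condition forces $\mu$ to be bounded away from zero, which completes the equivalence.

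Since both directions follow at once from Theorem \ref{c5.8} and Theorem \ref{tcompact2}, I do not anticipate any substantive obstacle; the real content of the statement resides in those two earlier theorems. The only point worth checking is that the quantifiers line up correctly: the forward implication uses the compact case of Theorem \ref{c5.8} (finitely many $I_f$'s), while the converse uses Theorem \ref{tcompact2} (infinitely many $I_f$'s), so the negation employed in the contrapositive matches exactly. Thus this theorem functions as a consolidation, recording the compact-set characterisation of $\mu$ being bounded away from zero that sits alongside the Lindel\"{o}f and interior-emptiness results of Theorems \ref{tL2} and \ref{c5.8}.
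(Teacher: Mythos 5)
Your proposal is correct and matches the paper exactly: the paper presents Theorem \ref{ccompact2} as a consolidation ("we can unite the above discussions"), with the forward direction coming from the compact case of Theorem \ref{c5.8} and the converse from the contrapositive of Theorem \ref{tcompact2}. Your check that the quantifiers and negations line up is the only substantive point, and it holds.
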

	
	We note that the compact set constructed while  proving Theorem \ref{tcompact2} is countably infinite. The question of  existence of Lindel\"{o}f sets in $\mathcal{M}_\mu$ which meets uncountably many $\{f\}$'s remains open. However, we partially answer this in the following result.
	
	\begin{theorem} \label{tLnon}
		Let $\mu$ be a measure which is not purely atomic. Then there exists an uncountable compact (and hence Lindel\"{o}f) set in $\mathcal{M}_\mu$.
	\end{theorem}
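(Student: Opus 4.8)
The plan is to realise the desired compact set as the image of a path built from the non-atomic part of $\mu$, along which distinct parameters land in distinct $\delta$-classes. By Theorem \ref{tdecompose} write $\mu=\mu_1+\mu_2$ with $\mu_1$ purely atomic, $\mu_2$ non-atomic, $\mu_1\mathcal{S}\mu_2$ and $\mu_2\mathcal{S}\mu_1$. Since $\mu$ is not purely atomic, $\mu_2$ is non-zero, so $b:=\mu_2(X)>0$. Using $\mu_2\mathcal{S}\mu_1$, I would first fix $F\in\mathscr{A}$ with $\mu_2(F)=\mu_2(X)=b$ and $\mu_1(F)=0$; all sets used below will be chosen inside $F$, so that $\mu_1$ contributes nothing to their measures and $\mu$ agrees with $\mu_2$ on them.

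Next I would produce a nested family $\{A_r\colon r\in[0,b]\}\subseteq\mathscr{A}$ with $A_r\subseteq F$, $\mu_2(A_r)=r$, $A_0=\emptyset$, $A_b=F$, and $A_r\subseteq A_s$ whenever $r\le s$. Such a family is furnished by (the argument of) Lemma \ref{nonatomic} applied to the non-atomic measure $\mu_2$ on $F$; equivalently, it is exactly the family constructed in the proof of Theorem \ref{tcomponent}(1), Case 2, for the function $f=\chi_F\in K_{\boldsymbol{0}}$. Define $\phi\colon[0,b]\longrightarrow\mathcal{M}_\delta$ by $\phi(r)=\chi_{A_r}$. For $r<s$ we have $A_r\subseteq A_s$, so $\chi_{A_r}-\chi_{A_s}=-\chi_{A_s\setminus A_r}$ and hence $X\setminus Z(\phi(r)-\phi(s))=A_s\setminus A_r$. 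Because $A_s\setminus A_r\subseteq F$ and $\mu_1(F)=0$, this gives $\delta(\phi(r),\phi(s))=\mu(A_s\setminus A_r)=\mu_2(A_s\setminus A_r)=s-r$. Thus $\phi$ is an isometric embedding of the interval $[0,b]$ into $\mathcal{M}_\delta$.

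Finally, set $K=\phi([0,b])$. Since $[0,b]$ is compact and $\phi$ is continuous (indeed isometric), $K$ is compact, and hence Lindel\"{o}f. Moreover $\delta(\phi(r),\phi(s))=|r-s|>0$ for $r\ne s$ shows that $\phi(r)$ and $\phi(s)$ lie in distinct classes $I_{\phi(r)}\ne I_{\phi(s)}$; as $[0,b]$ is uncountable, the assignment $r\mapsto I_{\phi(r)}$ is injective, so $K$ meets uncountably many distinct $I_f$'s, as required. The delicate point is the second step: it is not enough to invoke Theorem \ref{tnonatomic} to get sets of each prescribed measure, one must obtain them \emph{nested} and supported on a set where the atomic part $\mu_1$ vanishes, for only then is the pairwise distance exactly $|r-s|$ and the parametrisation by $\delta$-classes injective. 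Both features are secured by combining $\mu_2\mathcal{S}\mu_1$ with Lemma \ref{nonatomic}.
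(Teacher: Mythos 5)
Your proposal is correct and follows essentially the same route as the paper: decompose $\mu=\mu_1+\mu_2$, use $\mu_2\mathcal{S}\mu_1$ together with Lemma \ref{nonatomic} to build a nested family of sets of every measure in $[0,b]$ on which $\mu_1$ vanishes, and take the indicator functions to get an isometric copy of $[0,b]$. The only cosmetic difference is that you fix a single carrier $F$ with $\mu_1(F)=0$ up front and nest everything inside it, whereas the paper shrinks each $A_r$ to an $F_r$ individually before nesting; both yield the same compact set meeting uncountably many $I_f$'s.
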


	\begin{proof}
		By Theorem \ref{tdecompose}, $\mu$ can be expressed as the sum of a purely atomic measure $\mu_1$ and a non-atomic measure $\mu_2$ such that $\mu_1\mathcal{S}\mu_2$ and $\mu_2\mathcal{S}\mu_1$. Since $\mu$ is not purely atomic, $\mu_2$ is a non-zero measure. Let $\mu_2(X)=b>0$. So, it follows from Theorem \ref{tnonatomic} that we can associate with each $r\in[0,b]$, an $A_r\in \mathscr{A}$ such that $\mu(A_r)=r$. Since $\mu_2\mathcal{S}\mu_1$, for each $r\in [0,b]$, there exists $F_r\in \mathscr{A}$ with $F_r\subseteq A_r$ such that $\mu_2(F_r)=\mu_2(A_r)$ and $\mu_1(F_r)=0$. Then $\mu(F_r)=r$ and using Lemma \ref{nonatomic}, we can assume without loss of generality that $F_0=\emptyset$, $F_b=X$ and whenever $r,s\in [0,b]$ with $r<s$, $F_r\subseteq F_s$. With each $r\in [0,b]$, we assign a measurable function $f_r\colon X\longrightarrow \mathbb{R}$ defined by $f_r(x)=\begin{cases}
			1 &if\;x\in F_r \\
			0 &otherwise
		\end{cases}$ and define $\phi\colon [0,b]\longrightarrow \mathcal{M}_\mu$ as $\phi(r)=f_r$. We assert that $\phi$ is a continuous injection. Indeed, whenever $r,s\in [0,b]$ with $s\neq r$, $\delta(f_s,f_r)=\begin{cases}
			\mu(F_s\setminus F_r) &if\;r<s\\
			\mu(F_r\setminus F_s) &if\;r>s
		\end{cases}=|s-r|$ and so $\phi$ is continuous. Note that $f_r\neq f_s$ whenever $s\neq r$. Thus, $\phi([0,b])$ is a compact set in $\mathcal{M}_\mu$ which is uncountable.
	\end{proof}

	Whether the assumption ``$\mu$ is not purely atomic" in the statement of Theorem \ref{tLnon}, can be substituted with the hypothesis that ``$\mu$ is bounded away from zero" remains an unanswered question and we raise it for the readers.
	
	\begin{question}
		Let $(X,\mathscr{A},\mu)$ be a purely atomic measure space where $\mu$ is not bounded away from zero, does there exist a Lindel\"{o}f set in $\mathcal{M}_\mu$ which is uncountable?
	\end{question}

	\section{Connectedness in $\mathcal{M}_\mu$}
	
	We aim to find out the connected components in the space $ \mathcal{M}_\mu$. Since $\mathcal{M}_\mu$ is a topological ring, the component of each point $f\in \mathcal{M}_\mu$ can be obtained by translating the component of $\boldsymbol{0}$. Thus, we only attempt to compute the component of $\boldsymbol{0}$. We first observe that $\mathcal{M}_\mu$ is a path-connected space whenever the underlying measure is non-atomic.

	\begin{theorem} \label{tconnected}
		Suppose $(X,\mathscr{A},\mu)$ is a non-atomic measure space. Then $\mathcal{M}_\mu$ is path connected $($and hence, connected$)$.
	\end{theorem}

	\begin{proof}
		Since $\mu$ is non-atomic, for each $r\in[0,1]$, there exists $A_r\in\mathscr{A}$ such that $\mu(A_r)=r$ (Theorem \ref{tnonatomic}).  By Lemma~\ref{nonatomic}, without loss of generality we can assume that $A_0=\emptyset$, $A_1=X$ and whenever $r,s\in (0,1)$ with $r<s$, $A_r\subseteq A_s$ with $\mu(A_r)=r$, $\mu(A_s)=s$.
		
		Consider $f,g\in \mathcal{M}_\mu$ with $f\neq g$ and define $\phi\colon [0,1]\longrightarrow \mathcal{M}_\mu$ as follows: \[\phi(r)(x)=\begin{cases}
			g(x) &if\;x\in A_r \\
			f(x) &if\;x\in X\setminus A_r
		\end{cases}. \] Then $\phi(0)=f$ and $\phi(1)=g$. Moreover, for each $\epsilon>0$ and $r\in [0,1]$, $\phi((r-\epsilon,r+\epsilon)\cap [0,1])\subseteq B(\phi(r),\epsilon)$. This ensures that $\phi$ is continuous on $[0,1]$. So, $f$ and $g$ are connected by a path. Thus, $\mathcal{M}_\mu$ is path connected.
	\end{proof}
	
	Thus, if $\mu$ non-atomic measure, then $\mathcal{M}_\mu$ has only one component. Naturally, we are curious about the components of $\mathcal{M}_\mu$ if $\mu$ is purely atomic. We attend to this in the next result.
	
	\begin{theorem} \label{tatomic}
		If $(X,\mathscr{A},\mu)$ is a purely atomic measure space, then $\mathcal{M}_\mu$ is totally disconnected.
	\end{theorem}
	
	\begin{proof}
		Let $f,g\in \mathcal{M}_\mu$ be such that $g\neq f$. Since $\mu$ is purely atomic, it follows from Theorem \ref{atomic} that $[0,1]\setminus \mu(\mathscr{A})$ is dense in $[0,1]$. So, there exists $\epsilon\in (0,\delta(f,g))\setminus \mu(\mathscr{A})$. Note that $B(f,\epsilon)=\{h\in \mathcal{M}\colon \delta(f,h)\leq \epsilon \}$ is a clopen set in $\mathcal{M}_\mu$ which contains $f$ but misses $g$. Therefore, $\{f\}$ is the component of $f$ in $\mathcal{M}_\mu$
	\end{proof}

	For an element $y$ in a topological space $Y$, the path component of $y$ is defined as the largest path connected set containing $y$. Since, path connected sets are always connected, it is evident that for a purely atomic measure $\mu$, the path components are the singleton sets. 
	Moreover, for a non-atomic measure space, since $\mathcal{M}_\mu$ is path connected, it is the only path component as well (Theorem \ref{tconnected}). The following corollary is therefore immediate.
	
	\begin{corollary}
		For a purely atomic or a non-atomic measure space, the components and path components in the space $\mathcal{M}_\mu$ agree.
	\end{corollary}

	So far, we have discussed only the cases involving either a non-atomic measure or a purely atomic measure. In order to discuss the situation where $\mu$ does not fall under either of the aforementioned cases, we first consider an example.

	\begin{example} \label{eg5.0}
		Consider the measure $\mu=\frac{1}{3}(\mu_l+2\delta_0)$ on $([0,1],\mathscr{L})$. Then $\mu$ is neither purely atomic nor non-atomic. 
		We first argue that the set $K_{\boldsymbol{0}}=\{f\in \mathcal{M}\colon \mu(X\setminus Z(f))\leq \frac{1}{3} \}$ is path-connected (and hence, connected). Indeed, if  $f\in K_{\boldsymbol{0}}$, then $\phi\colon [0,\frac{1}{3}]\longrightarrow K_{\boldsymbol{0}}$, defined as $\phi(r)(x)=\begin{cases}
			0 &if\;x\in A_r \\
			f(x) &if\;x\in X\setminus A_r
		\end{cases}$ constitutes a path in $K_{\boldsymbol{0}}$ which joins $\boldsymbol{0}$ and $f$; where $A_r=(0,r)$ for all $r\in (0,\frac{1}{3})$, $A_0=\emptyset$ and $A_{\frac{1}{3}}=X$. Now, note that $N(\mathcal{M}_\mu)=[0,\frac{1}{3}]\sqcup [\frac{2}{3},1]$ and $N$ is continuous. Therefore, $K_{\boldsymbol{0}}$ is the component of $\boldsymbol{0}$ in $\mathcal{M}_\mu$. Note that $\chi_{\{0\}}\in K_{\boldsymbol{0}}\setminus \{\boldsymbol{0}\}$ and $\boldsymbol{1}\in \mathcal{M}_\mu\setminus K_{\boldsymbol{0}}$.
	\end{example}
	
	Note that the non-atomic part and purely atomic part in the above example are $\mu_2=\frac{1}{3}\mu_l$ and $\mu_1=\frac{2}{3}\delta_0$ respectively; and the set $K_{\boldsymbol{0}}$ can also be expressed as $\{f\in \mathcal{M}\colon \mu_1(X\setminus Z(f))=0 \}$. Moreover, consider $\mu=\mu_1+\mu_2$ as in Theorem \ref{Johnson} and $K_{\boldsymbol{0}}=\{f\in \mathcal{M}\colon \mu_1(X\setminus Z(f))=0 \}$. Then, for a purely atomic measure, $K_{\boldsymbol{0}}=\{\boldsymbol{0}\}$ and for a non-atomic measure, $K_{\boldsymbol{0}}=\mathcal{M}_\mu$, which are the components of $\boldsymbol{0}$ in the respective cases.  It is therefore pertinent to ask if the component of $\boldsymbol{0}$ in $\mathcal{M}_\mu$ is always of the form $K_{\boldsymbol{0}}$. We answer this in the affirmative through the following result.
	
	\begin{theorem} \label{tcomponent}
		Let $\mu=\mu_1+\mu_2$ be a measure on a measurable space $(X,\mathscr{A})$, where $\mu_1$ is a purely atomic measure, $\mu_2$ a non-atomic measure, $\mu_1\mathcal{S}\mu_2$ and $\mu_2\mathcal{S}\mu_1$. Also let $K_{\boldsymbol{0}}=\{f\in \mathcal{M}\colon \mu_1(X\setminus Z(f))=0 \}$. Then the following assertions hold.
		\begin{enumerate}[label=\arabic*.]
			\item $K_{\boldsymbol{0}}$ is a path connected set in $\mathcal{M}_\mu$.
			\item For any $\epsilon\in [0,1]\setminus \mu_1(\mathscr{A})$, $B_1(\epsilon)=\{f\in \mathcal{M}\colon \mu_1(X\setminus Z(f))<\epsilon \}$ is a clopen set in $\mathcal{M}_\mu$.
			\item $K_{\boldsymbol{0}}$ is the component of  $\boldsymbol{0}$ in $\mathcal{M}_\mu$.
		\end{enumerate}
	\end{theorem}
	
	\begin{proof}
		\	\begin{enumerate}[label=\arabic*.]
			\item Let $f\in K_{\boldsymbol{0}}\setminus \{\boldsymbol{0} \}$. Define $b=\mu(X\setminus Z(f))$. Since $f\in K_{\boldsymbol{0}}$, $b=\mu(X\setminus Z(f))=\mu_2(X\setminus Z(f))$.
			In light of Theorem \ref{tnonatomic} and Lemma \ref{nonatomic}, for each $r\in (0,b)$, there exists $F_r\in \mathscr{A}$ with $F_r \subseteq X\setminus Z(f)$ such that $\mu_2(F_r)=r$ and whenever $r,s\in [0,b]$ with $r<s$, $F_r\subseteq F_s$. Moreover, take $F_0=\emptyset$ and $F_{b}=X\setminus Z(f)$. Define $\phi\colon [0,b]\longrightarrow K_{\boldsymbol{0}}$ such that $\phi(r)(x)=\begin{cases}
					0 &if\;x\in F_r \sqcup Z(f) \\
					f(x) &otherwise
				\end{cases}$, for each $r\in [0,b]$. Then $\phi(0)=f$ and $\phi(b)=\boldsymbol{0}$. For all $r,s\in [0,b]$, $\delta(r,s)\leq |r-s|$. This ensures that $\phi$ is continuous on $[0,b]$ and thus $\phi$ defines a path in $K_{\boldsymbol{0}}$ joining $\boldsymbol{0}$ and $f$. Therefore,  $K_{\boldsymbol{0}}$ is path connected.
		
			\item Let $g\in B_1(\epsilon)$ and choose a positive real number $\epsilon_1<\epsilon-\mu_1(X\setminus Z(g))$. Since $\mu_1(A)\leq \mu(A)$ for any $A\in \mathscr{A}$; it follows that $B(g,\epsilon_1)\subseteq B_1(\epsilon)$ and so $B_1(\epsilon)$ is open. Again let $h\notin B_1(\epsilon)$. Then as $\epsilon\notin \mu_1(\mathscr{A})$, $\mu_1(X\setminus Z(h))>\epsilon$. Now, choose a positive real number $\epsilon_2<\mu_1(X\setminus Z(h))-\epsilon$. It can be easily observed that $B(h,\epsilon_2)\cap B_1(\epsilon)=\emptyset$ and so $B_1(\epsilon)$ is closed as well.
			
			\item It is sufficient to show that for any $f\notin K_{\boldsymbol{0}}$, then there exists a clopen set in $\mathcal{M}_\mu$ which contains $K_{\boldsymbol{0}}$ and misses $f$. Indeed since $\mu_1(X\setminus Z(f))>0$ and $[0,1]\setminus \mu_1(\mathscr{A})$ is dense in $[0,1]$, there exists $\epsilon\in (0,\mu_1(X\setminus Z(f)))\setminus \mu_1(\mathscr{A})$. It is now clear that the clopen set $B_1(\epsilon)$ contains $K_{\boldsymbol{0}}$ but misses $f$.
		\end{enumerate}
	\end{proof}
	The following corollary follows immediately, as $\mathcal{M}_\mu$ is a topological ring.
	\begin{corollary}
		Considering the hypothesis of Theorem \ref{tcomponent}, for each $f\in \mathcal{M}$, the set $K_f=f+K_{\boldsymbol{0}}=\{g\in \mathcal{M}\colon \mu_1(X\setminus Z(f-g))=0 \}$ is the component of $f$ in $\mathcal{M}_\mu$.

	\end{corollary}

	Furthermore, as $K_{\boldsymbol{0}}$ is itself path connected and is the component of $\boldsymbol{0}$ in $\mathcal{M}$, the following conclusion is immediate.
	
	\begin{corollary} \label{cpath}
		$K_{\boldsymbol{0}}$ (resp. $K_f$) is the path component of $\boldsymbol{0}$ (resp. $f$) in $\mathcal{M}_\mu$.
	\end{corollary}

	We now revisit the definition of quasicomponent of a point. In a topological space $Y$, the quasicomponent of a point $y\in Y$ is defined to be the intersection of all clopen sets in $Y$, containing $y$. In general, the quasicomponent of a point $y$ contains the component of $y$ in $Y$, which in turn contains the path component of $y$. We realise in the next result that these three notions coincide in $\mathcal{M}_\mu$.
	
	\begin{theorem} \label{tquasi}
		For any measure space $(X,\mathscr{A},\mu)$, then the quasicomponent, component and path component of each point in $\mathcal{M}_\mu$ coincide. 
	\end{theorem}
	
	\begin{proof}
		The fact that the path component and component of each point in $\mathcal{M}_\mu$ coincide follows from the fact that the components in this space are itself path connected. Furthermore, the component of $\boldsymbol{0}$ in $\mathcal{M}_\mu$ can be expressed as: $K_{\boldsymbol{0}}=\bigcap\limits_{\epsilon\in [0,1]\setminus \mu_1(\mathscr{A})}B_1(\epsilon)$, where each $B_1(\epsilon)$ is clopen in $\mathcal{M}_\mu$. Therefore, $K_{\boldsymbol{0}}$ (and resp. $K_f$) is also the quasicomponent of $\boldsymbol{0}$ (resp. $f$) in $\mathcal{M}_\mu$.
	\end{proof}
	
	We must note that if $\mathcal{M}_\mu$ is connected, then $K_{\boldsymbol{0}}=\mathcal{M}_\mu$.  Therefore, $\mu_1(X\setminus Z(\boldsymbol{1}))=0$. That is, $\mu_1(X)=0$ and so $\mu$ is non-atomic. Combining this observation with Theorem \ref{tconnected}, we present the following theorem, which characterises the non-atomicity of $\mu$.
	
	\begin{theorem} \label{tnon}
		For a measure space $(X,\mathscr{A},\mu)$, the space $\mathcal{M}_\mu$ is connected if and only if $\mu$ is a non-atomic measure.
	\end{theorem}

	
	Moreover, purely atomic measures can also be characterised in a similar manner, as has been noted in the following result.
	
	\begin{theorem} \label{tatomiccomp}
		Assume the hypothesis of Theorem \ref{tcomponent}. Then, the component of $\boldsymbol{0}$, $K_{\boldsymbol{0}}= \{\boldsymbol{0}\}$ (or, any $K_f=\{f\}$) if and only if $\mu$ is purely atomic. In other words, $\mathcal{M}_\mu$ is totally disconnected if and only if  $\mu$ is purely atomic.
	\end{theorem}
	
	\begin{proof}
		Assume that $\mu$ is not purely atomic. Then the non-atomic part, $\mu_2$ is a non-trivial measure. Since $\mu_2\mathcal{S}\mu_1$, there exists $F\in \mathscr{A}$ with $\mu_2(F)=\mu_2(X)>0$ and $\mu_1(F)=0$. Consider $f=\chi_F$. Then $\mu_1(X\setminus Z(f))=\mu_1(F)=0$ and $\mu(X\setminus Z(f))=\mu(F)=\mu_2(F)>0$. Therefore, $f\in K_{\boldsymbol{0}}\setminus \{\boldsymbol{0}\}$. The converse follows from Theorem \ref{tatomic}.
	\end{proof}

	A topological space $Y$ is said to be zero-dimensional if it has a clopen base. 
	In this context, we deduce a necessary and sufficient condition for the space $\mathcal{M}_\mu$ to be zero-dimensional, via the atomicity of the underlying measure $\mu$.
	
	\begin{theorem} \label{r0}
		$\mathcal{M}_\mu$ is a zero-dimensional space if and only if $\mu$ is purely atomic. 
	\end{theorem}
	\begin{proof}
		Let us suppose that $\mu$ is purely atomic. By Lemma \ref{atomic}, $[0,1]\setminus \mu(\mathscr{A})$ is dense in $[0,1]$. Then the  collection $\{B(f,\epsilon)\colon f\in \mathcal{M}_\mu,\; \epsilon>0\;with\;\epsilon\notin \mu(\mathscr{A}) \}$ forms a clopen base for the space $\mathcal{M}_\mu$. 
		
		Suppose $\mathcal{M}_\mu$ is zero-dimensional and if possible let $\mu$ be not purely atomic. Then it follows from Theorem \ref{tatomiccomp} that $\{\boldsymbol{0}\}\subsetneqq K_{\boldsymbol{0}}$. Choose $f\in K_{\boldsymbol{0}}\setminus \{\boldsymbol{0}\}$. Since $\{\boldsymbol{0}\}$ is closed, there exists a clopen set $K$ in $\mathcal{M}_\mu$ such that $\{\boldsymbol{0}\}\subseteq K$ and $f\notin K$. Therefore, $K\cap K_{\boldsymbol{0}}$ is a non-trivial clopen set in $K_{\boldsymbol{0}}$, which contradicts that $K_{\boldsymbol{0}}$ is connected.
	\end{proof}

	The above results can be consolidated as follows:
	
	\begin{theorem} \label{t3.13}
		The following statements are equivalent for a measure space $(X,\mathscr{A},\mu)$.
		
		
		\begin{enumerate}[label=\arabic*.]
			\item $\mu$ is purely atomic.
			\item $\mathcal{M}_\mu$ is totally disconnected.
			\item $\mathcal{M}_\mu$ is zero-dimensional.
			
		\end{enumerate}
		
	\end{theorem}

	We shall terminate this section with the discussion of the notion of local connectedness of the space $\mathcal{M}_\mu$. That $\mathcal{M}_\mu$ is locally connected, whenever $\mu$ is bounded away from zero is obvious, as the space then becomes discrete. However, for a purely atomic measure $\mu$ which is not bounded away from zero, it follows from Theorem \ref{tatomic} that $\mathcal{M}_\mu$ is not locally connected. Moreover, for a non-atomic measure space, $\mathcal{M}_\mu$ is connected, and hence locally connected. We consolidate these ideas in a more general setting in the next result.
	
	\begin{theorem} \label{tlocal}
		Consider $\mu=\mu_1+\mu_2$, where $\mu_1$ and $\mu_2$ are as described in Theorem \ref{tdecompose}. Then $\mathcal{M}_\mu$ is locally connected if and only if the purely atomic part, $\mu_1$ is either zero or is bounded away from zero. 
	\end{theorem}
	
	\begin{proof}
		
		If $\mu_1$ is the zero measure, then $\mu$ is non-atomic and thus $\mathcal{M}_\mu$ is connected. 
		
		Assume that $\mu_1$ is bounded away from zero and let $\lambda>0$ be such that for every positive measurable set $A$, $\mu_1(A)> \lambda$. We assert that $K_{\boldsymbol{0}
		}$ (resp. each $K_f$) is open in $\mathcal{M}_\mu$. Let $f\in K_{\boldsymbol{0}}$ and $h\in B(f,\lambda)$. Then $\mu(X\setminus Z(f-h))<\lambda$ and so $\mu_1(X\setminus Z(f-h))<\lambda$. This ensures that $\mu_1(X\setminus Z(f-h))=0$. Since $f\in K_{\boldsymbol{0}}$, $\mu_1(X\setminus Z(f))=0$ and as a result $\mu_1(X\setminus Z(h))=0$. Hence, $h\in K_{\boldsymbol{0}}$.
		
		Conversely suppose $\mu_1$ is not bounded away from zero. If possible let, $\mathcal{M}_\mu$ be locally connected and $K$ be a connected neighbourhood of $\boldsymbol{0}$ in $\mathcal{M}_\mu$. Then there exists $\epsilon>0$ such that $B(\boldsymbol{0},\epsilon)\subseteq K\subseteq K_{\boldsymbol{0}}$. Since $\mu_1$ is not bounded away from zero, there exists $A\in \mathscr{A}$ such that $0<\mu_1(A)<\epsilon$. As $\mu_1\mathcal{S}\mu_2$, there exists $F\in \mathscr{A}$ such that $\mu_1(A)=\mu_1(F)$ and $\mu_2(F)=0$. Therefore, $\mu(F)=\mu_1(F)<\epsilon$ and so $\chi_F\in B(\boldsymbol{0},\epsilon)$ but as $\mu_1(F)>0$, $\chi_F\notin K_{\boldsymbol{0}}$, which contradicts that $B(\boldsymbol{0},\epsilon)\subseteq K_{\boldsymbol{0}}$.
	\end{proof}
	
	\begin{remark}
		We recall  at this point that components and quasicomponents of a topological space $Y$ coincide if $Y$ is locally connected or is compact and Hausdorff. However, for a measure whose purely atomic part is not bounded away from zero (see Example \ref{eg}(\ref{eg0})), $\mathcal{M}_\mu$ provides an example of a space which is neither locally connected (Theorem \ref{tlocal}) nor compact (Corollary \ref{ccompact}) and yet the notions of components and quaiscomponents coincide (Theorem \ref{tquasi}).
	\end{remark}

\end{document}